\documentclass[11pt]{amsart}

\usepackage{amsmath,amssymb,amsfonts,amsthm}
\usepackage{mathtools}
\usepackage{mathrsfs}
\usepackage[colorlinks=true,linkcolor=blue,citecolor=blue,urlcolor=blue]{hyperref}
\usepackage[nameinlink,capitalize]{cleveref}
\usepackage{aliascnt}

\theoremstyle{plain}
\newtheorem{theorem}{Theorem}[section]

\newaliascnt{mainthm}{theorem}
\newtheorem{mainthm}[mainthm]{Main Theorem}
\aliascntresetthe{mainthm}

\newaliascnt{proposition}{theorem}
\newtheorem{proposition}[proposition]{Proposition}
\aliascntresetthe{proposition}

\newaliascnt{lemma}{theorem}
\newtheorem{lemma}[lemma]{Lemma}
\aliascntresetthe{lemma}

\newaliascnt{corollary}{theorem}
\newtheorem{corollary}[corollary]{Corollary}
\aliascntresetthe{corollary}

\theoremstyle{definition}
\newaliascnt{definition}{theorem}
\newtheorem{definition}[definition]{Definition}
\aliascntresetthe{definition}

\theoremstyle{remark}
\newaliascnt{remark}{theorem}
\newtheorem{remark}[remark]{Remark}
\aliascntresetthe{remark}

\crefname{mainthm}{Main Theorem}{Main Theorems}
\Crefname{mainthm}{Main Theorem}{Main Theorems}
\crefname{theorem}{Theorem}{Theorems}
\Crefname{theorem}{Theorem}{Theorems}
\crefname{proposition}{Proposition}{Propositions}
\Crefname{proposition}{Proposition}{Propositions}
\crefname{lemma}{Lemma}{Lemmas}
\Crefname{lemma}{Lemma}{Lemmas}
\crefname{corollary}{Corollary}{Corollaries}
\Crefname{corollary}{Corollary}{Corollaries}
\crefname{definition}{Definition}{Definitions}
\Crefname{definition}{Definition}{Definitions}
\crefname{remark}{Remark}{Remarks}
\Crefname{remark}{Remark}{Remarks}

\newcommand{\C}{\mathbb C}
\newcommand{\R}{\mathbb R}

\newcommand{\id}{\mathrm{id}}
\newcommand{\loc}{\mathrm{loc}}
\newcommand{\dA}{\,dA}
\newcommand{\dbar}{\bar\partial}
\newcommand{\dz}{\partial}
\newcommand{\Sol}{\mathfrak S}
\newcommand{\Cauchy}{\mathcal C}
\newcommand{\Beurling}{\mathcal B}

\newcommand{\op}{\mathrm{op}}

\newcommand{\BMO}{\mathrm{BMO}}
\newcommand{\HL}{\mathcal M} % Hardy--Littlewood maximal operator

\title[An Orlicz variational formula for David-type Beltrami equations]
{An Orlicz variational formula for David-type Beltrami equations}

\author{Ryo Matsuda}
\address{Department of Mathematical Sciences, College of Science and Engineering\\
Ritsumeikan University, 1-1-1 Nojihigashi, Kusatsu, Shiga 525-8577, Japan}
\email{r-mat@fc.ritsumei.ac.jp}

\subjclass[2020]{Primary 30C62; Secondary 46E30, 35J70}
\keywords{Degenerate Beltrami equation, David homeomorphism, exponential
Orlicz space, first variation, Fr\'echet differentiability}

\begin{document}

\begin{abstract}
Let \(\mathcal U\Subset\C\) be fixed,
\[
    \Phi(s)=e^s-s-1,
    \qquad
    F(\nu)=\frac{\nu}{2+|\nu|},
\]
and let \(f^{F(\nu)}\) denote the principal solution of the corresponding
Beltrami equation.  The identity \(K_{F(\nu)}=1+|\nu|\) identifies compactly
supported David coefficients with exponential-Orlicz parameters.  We prove
that, on the open subset of \(L^\Phi_{\mathcal U}(\C)\) where a sufficiently
high finite exponential moment is available, the principal solution map is
locally real \(C^{1,1}\) with values in \(W^{1,2}_{\mathrm{loc}}(\C)\).
The derivative in a direction \(\eta\in L^\Phi_{\mathcal U}(\C)\) is
the principally normalized solution of
\[
    \dbar V-F(\nu)\dz V
    =DF_\nu(\eta)\,\dz f^{F(\nu)}.
\]
The proof uses a pullback by the base principal solution.  The key estimate
is the pointwise cancellation
\[
    \frac{\|DF_\nu\|_{\op}}{1-|F(\nu)|^2}\le \frac12,
\]
which converts the linearized equation into a \(\dbar\)-equation whose
source is controlled directly by the \(L^\Phi\)-norm of the direction.
Combined with the principal degenerate \(L^2\)-resolvent and the optimal
Jacobian regularity for exponentially integrable distortion, this yields a
uniform quadratic remainder estimate.  At the origin one obtains
\(D\Sol_0[\eta]=(1/2)\mathcal C\eta\) in \(W^{1,2}_{\mathrm{loc}}\).
\end{abstract}

\maketitle

\section{Introduction}

Consider the Beltrami equation
\begin{equation}\label{eq:intro-beltrami}
    \dbar f=\mu\,\dz f,
    \qquad |\mu|<1\quad\text{a.e.}
\end{equation}
on the complex plane.  In the uniformly elliptic case
\(\|\mu\|_\infty<1\), the Ahlfors--Bers theory gives analytic dependence of
normalized solutions on the coefficient.  The purpose of this paper is to
obtain a genuine Banach-space variation theorem when the ellipticity is
allowed to degenerate and only exponential integrability of the distortion
is assumed.

The usual Beltrami coefficient is poorly adapted to this problem: a David
coefficient can approach the unit circle and therefore does not lie in the
interior of the unit ball of \(L^\infty\).  We instead use the real nonlinear
coordinate
\begin{equation}\label{eq:intro-coordinate}
    \nu=\frac{2\mu}{1-|\mu|},
    \qquad
    F(\nu)=\frac{\nu}{2+|\nu|}.
\end{equation}
Its basic feature is
\begin{equation}\label{eq:intro-distortion}
    K_{F(\nu)}
    =\frac{1+|F(\nu)|}{1-|F(\nu)|}
    =1+|\nu|.
\end{equation}
Thus exponential integrability of the distortion becomes membership in the
Orlicz space generated by \(\Phi(s)=e^s-s-1\).

Throughout the paper, \(\mathcal U\Subset\C\) is a fixed bounded domain and
all parameters vanish almost everywhere on \(\C\setminus\mathcal U\).  For
\(\nu\in L^\Phi_{\mathcal U}(\C)\), put
\[
    p(\nu)
    :=\sup\left\{p>0:
      \int_{\mathcal U}e^{p|\nu|}\dA<\infty\right\}
    \in(0,\infty].
\]
Let \(p_{\mathrm{res}}\) be the universal threshold in AIM's
degenerate \(L^2\)-resolvent theorem
\cite[Theorem~20.4.3]{AIM}.  Its proof permits the choice
\[
    p_{\mathrm{res}}=8C_1C_2,
\]
where \(C_1\) is the universal constant in the Jacobian--BMO estimate
\cite[Theorem~20.4.4]{AIM}, and \(C_2\) is the universal constant in the
BMO-majorant estimate \cite[Corollary~20.4.2]{AIM}.  Fix once and for all
\begin{equation}\label{eq:intro-Lambda}
    \Lambda>\max\{p_{\mathrm{res}},8\}.
\end{equation}
We work on the high-moment domain
\begin{equation}\label{eq:intro-domain}
    \mathscr D_\Lambda
    :=\{\nu\in L^\Phi_{\mathcal U}(\C):p(\nu)>\Lambda\}.
\end{equation}
This is an open subset of the Banach space \(L^\Phi_{\mathcal U}(\C)\).

For \(\nu\in\mathscr D_\Lambda\), let
\[
    \mu_\nu=F(\nu),
    \qquad
    f_\nu=f^{\mu_\nu},
    \qquad
    \Sol(\nu)=f_\nu,
\]
where \(f_\nu(z)=z+O(1/z)\) is the principal solution.  Since
\(p(\nu)>\Lambda>1\), the optimal regularity theorem for mappings of
exponentially integrable distortion gives
\(f_\nu\in W^{1,2}_{\loc}(\C)\); see
\cite[Corollary~1.2]{AGRS}.

The main theorem is the following.

\begin{mainthm}\label{thm:intro-main}
The solution map
\[
    \Sol:\mathscr D_\Lambda
    \longrightarrow W^{1,2}_{\loc}(\C),
    \qquad
    \Sol(\nu)=f^{F(\nu)},
\]
is locally real \(C^{1,1}\).  Its derivative is the bounded real-linear
operator
\[
    D\Sol_\nu:L^\Phi_{\mathcal U}(\C)
    \longrightarrow W^{1,2}_{\loc}(\C)
\]
determined by
\begin{equation}\label{eq:intro-linearized}
    \dbar D\Sol_\nu[\eta]
    -F(\nu)\dz D\Sol_\nu[\eta]
    =DF_\nu(\eta)\,\dz f^{F(\nu)},
\end{equation}
together with the principal normalization
\(D\Sol_\nu[\eta](z)=O(1/z)\) at infinity.

More precisely, for every \(\nu_*\in\mathscr D_\Lambda\) and every compact
set \(K\Subset\C\), there exist \(r,C>0\) such that
\begin{align}
    \|D\Sol_\nu[\eta]\|_{W^{1,2}(K)}
    &\le C\|\eta\|_{L^\Phi(\mathcal U)},
    \label{eq:intro-derivative-bound}\\
    \|\Sol(\nu+h)-\Sol(\nu)-D\Sol_\nu[h]\|_{W^{1,2}(K)}
    &\le C\|h\|_{L^\Phi(\mathcal U)}^2
    \label{eq:intro-quadratic}
\end{align}
whenever
\(\|\nu-\nu_*\|_{L^\Phi}<r\) and
\(\|\nu+h-\nu_*\|_{L^\Phi}<r\).  On a smaller ball,
\begin{equation}\label{eq:intro-lipschitz-derivative}
    \|D\Sol_{\nu_1}-D\Sol_{\nu_0}\|_{
       \mathcal L_{\R}(L^\Phi_{\mathcal U},W^{1,2}(K))}
    \le C\|\nu_1-\nu_0\|_{L^\Phi(\mathcal U)}.
\end{equation}
\end{mainthm}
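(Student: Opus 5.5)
The argument is a pullback by the base principal solution, as announced in the abstract.

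\emph{Step 1: difference and pullback.} Fix \(\nu\in\mathscr D_\Lambda\) near \(\nu_*\) and set \(\mu=F(\nu)\), \(f=f_\nu\). Let \(\tilde\mu=F(\nu+h)\) and \(g=f_{\nu+h}\). Then \(g=\psi\circ f\), where \(\psi\) is the principal solution of \(\dbar\psi=\sigma\,\dz\psi\). The coefficient \(\sigma\) is given by the composition rule
\[
  \sigma\circ f=\frac{\tilde\mu-\mu}{1-\bar\mu\tilde\mu}\,\frac{\dz f}{\overline{\dz f}}.
\]

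\emph{Step 2: the cancellation estimate.} By the mean value theorem along the segment \(\nu+th\), we have \(|\tilde\mu-\mu|\le\sup_t\|DF_{\nu+th}\|_{\op}|h|\). Combining the pointwise bound \(\|DF_\nu\|_{\op}\le\frac12(1-|F(\nu)|^2)\), which is a direct computation from \(F(\nu)=\nu/(2+|\nu|)\), with the comparability of \(1-|F|^2\) along the segment gives
\[
  |\sigma\circ f|\lesssim |h|\cdot\frac{1-|\mu|^2}{|1-\bar\mu\tilde\mu|}\lesssim |h|\,(1+|h|).
\]
The factor \(1/(1-|\mu|^2)\) blowing up on the degeneracy set is exactly cancelled. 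To first order, \(\psi(w)-w\approx\Cauchy(\sigma)\). This converts the linearized equation \eqref{eq:intro-linearized} under the substitution \(V=W\circ f\) into \(\dbar W=\sigma_1\) with \(\sigma_1\circ f=DF_\nu(h)\,\dz f/(\overline{\dz f}(1-|\mu|^2))\). The source is pointwise \(\le\frac12|h|\).

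\emph{Step 3: integrability in the \(w\)-plane.} We need \(\sigma_1\in L^q(f(\mathcal U))\) for some \(q>2\), or at least good enough to put \(\Cauchy\sigma_1\circ f\) in \(W^{1,2}_{\loc}\). Changing variables gives
\[
  \int|\sigma_1|^q\,dw=\int|\sigma_1\circ f|^qJ_f\,dz.
\]
Here we use the optimal Jacobian regularity of \cite{AGRS}: since \(p(\nu)>\Lambda\), \(J_f\log^{\beta}(e+J_f)\in L^1_{\loc}\) with exponent growing with \(p\). By Young's inequality in the Orlicz duality \(\Phi\) versus \(L\log L\), \(\int|h|^qJ_f\lesssim\|h\|_{L^\Phi}^q\) once the Jacobian has \(L\log^qL\) integrability. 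This is where \(\Lambda>8\) enters. Then \(D\Sol_\nu[h]=(\Cauchy\sigma_1)\circ f\). Its \(W^{1,2}(K)\) norm is controlled by the chain rule,
\[
  |D(W\circ f)|^2\le|DW\circ f|^2K_\mu J_f,
\]
and by the degenerate \(L^2\)-resolvent \cite[Theorem~20.4.3]{AIM}, applicable since \(p(\nu)>p_{\mathrm{res}}\). Equivalently, we solve \eqref{eq:intro-linearized} directly via that resolvent with right side \(DF_\nu(\eta)\dz f\in L^2\). This yields \eqref{eq:intro-derivative-bound}, with constants uniform on a ball because \(p(\cdot)\) is lower semicontinuous and exponential moments are locally uniformly bounded in \(L^\Phi\).

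\emph{Step 4: quadratic remainder and Lipschitz derivative.} Write \(\sigma=\sigma_1+\sigma_2\), where \(|\sigma_2\circ f|\lesssim|h|^2\) pointwise. This follows from a second-order Taylor bound on \(F\) with the same cancellation (\(\|D^2F\|\lesssim 1-|F|^2\)) together with the denominator expansion. Then
\[
  \psi-\id-\Cauchy\sigma_1=\Cauchy\sigma_2+\Cauchy(\sigma\,\Beurling(\psi-\id)'\,\ldots),
\]
which is the Neumann series of the nonlinear Beltrami problem for \(\psi\). Here \(\|\sigma\|_\infty\lesssim\|h\|_\infty\) is not available; only \(\|\sigma\|_{L^q}\) is small. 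So I would instead treat \(\psi\) as a principal solution of a David equation with small exponential distortion, using \(K_\sigma\le1+C|h|\), and apply the resolvent a second time to get \(\|\psi-\id-\Cauchy\sigma\|\lesssim\|h\|^2_{L^\Phi}\) in a weighted \(W^{1,2}\). Pulling back by \(f\) as in Step 3 gives \eqref{eq:intro-quadratic}. Finally, \eqref{eq:intro-lipschitz-derivative} follows from \eqref{eq:intro-quadratic} by the standard argument comparing remainders at \(\nu_0,\nu_1\) with increments \(h\) of size \(\sim\|\nu_1-\nu_0\|\) and using real-linearity, or directly from the difference of the two linearized equations via the resolvent.

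\emph{Main obstacle.} I expect Step 4 to be hardest: the quadratic bound for \(\psi\) with only integrable, not bounded, smallness of \(\sigma\). It needs the exponent bookkeeping \(\Lambda>\max\{p_{\mathrm{res}},8\}\) to close the Hölder/Orlicz estimates uniformly.
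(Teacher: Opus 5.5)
\textbf{The linear part is right; the quadratic remainder (Step 4) has a genuine gap.}

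Your Steps 1--3 essentially reproduce the paper's treatment of the linear problem. The derivative is $(\Cauchy\sigma_1)\circ f$, the cancellation gives $|\sigma_1\circ f|\le\tfrac12|h|$, and the area formula with $J_f\log^8(e+J_f)\in L^1$ puts $\sigma_1\in L^7$. The bound $|D(W\circ f)|^2\le|DW\circ f|^2K_\mu J_f$, with H\"older against $K_\mu\circ f^{-1}\in L^{7/5}$, then gives $W^{1,2}$. If you invoke the resolvent directly, its hypothesis is $\kappa_\mu H\in L^2$, which needs the product estimate, not merely $H\in L^2$.

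The gap is in Step 4, the quadratic remainder, which is the real content of the theorem. Factoring $f_{\nu+h}=\psi\circ f$ produces a target-plane coefficient $\sigma$ that you cannot control, for three reasons.

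\emph{First}, the bound $K_\sigma\le1+C|h|$ is false. At points where $\nu=-a$ and $\nu+h=a$ ($a>0$), one has $|\sigma\circ f|=2m/(1+m^2)$ with $m=a/(2+a)$. Hence $K_\sigma\circ f=(1+a)^2=(1+|h|/2)^2$. The cancellation only says that $F$ is $\tfrac12$-Lipschitz into the disc with metric $|d\mu|/(1-|\mu|^2)$, i.e. $K_\sigma\circ f\le e^{|h|}$. For the same reason, $1-|F|^2$ is not comparable along $[\nu,\nu+h]$ when $|h|$ is large.

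\emph{Second, and more seriously}, exponential moments of $K_\sigma$ live in the $w$-plane: $\int e^{pK_\sigma}\,dw=\int e^{pK_\sigma\circ f}J_f\,dz$. The only Jacobian information, $J_f\log^8(e+J_f)\in L^1$, pairs with \emph{polynomial} expressions of total degree $\le8$ in exponentially integrable functions; this is exactly the paper's product lemma. It never pairs with $e^{c|h|}$. So nothing gives David-type control of $\psi$, let alone control uniform on a Luxemburg ball, and AIM's resolvent cannot be applied a second time.

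\emph{Third}, even a bound on $X=\psi-\id-\Cauchy\sigma_1$ in $\dot W^{1,2}(dw)$ would not pull back. You have $\int|D(X\circ f)|^2\,dz\le\int(K_\mu\circ f^{-1})|DX|^2\,dw$. For the linear term this closed only because $DW\in L^7$. Nothing gives $DX\in L^p(dw)$ with $p>2$ for a degenerate nonlinear remainder.

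\textbf{The missing idea:} use the pullback only for the linear equation, where the target problem is the non-degenerate equation $\dbar W=\rho$, and treat the remainder in the source plane.

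Set $Z=f_{\nu+h}-f-V$ and $\mu_h=F(\nu+h)$. Then $\dbar Z-\mu_h\dz Z=R_1+R_2$, where $R_1=(\mu_h-\mu-DF_\nu(h))f_z$ and $R_2=(\mu_h-\mu)V_z$. Since $\nu+h$ lies in the same small ball, $\mu_h$ itself has a uniform $q$-moment. The principal resolvent with $\mu_h$'s \emph{own} majorant $\kappa_h$, which has a uniform exponential moment, gives $\|DZ\|_2\le8\|\kappa_h(R_1+R_2)\|_2$.

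The two source terms are then handled as follows.
\begin{enumerate}
\item $\|\kappa_hR_1\|_2\lesssim\|h\|_\Phi^2$ follows from the quadratic Taylor bound for $F$, $|f_z|^2\le K_\mu J_f$, and the product lemma with degrees $2+4+1$.
\item $\|\kappa_hR_2\|_2\lesssim\|\kappa_h h\,DV\|_2$ is controlled by a \emph{weighted} pullback estimate: $\int\kappa_h^2|h|^2|DV|^2\,dz\lesssim\|Q\|_{7/5}\|DW\|_7^2$, with $Q=(\kappa_h^2|h|^2K_\mu)\circ f^{-1}$. Here $\|Q\|_{7/5}^{7/5}=\int\kappa_h^{14/5}|h|^{14/5}K_\mu^{7/5}J_f\,dz$, of total degree $7\le8$.
\end{enumerate}
The principal Poincar\'e inequality handles the $L^2$ part of the $W^{1,2}(K)$ norm. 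Once this quadratic estimate is in place, your Lipschitz-derivative argument goes through.
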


The proof does not require a general positive-logarithmic resolvent estimate
for arbitrary inhomogeneous Beltrami equations.  Instead it uses the special
form of the linearized source.  If \(\mu=F(\nu)\), \(f=f^\mu\), and
\(V=W\circ f\), then
\begin{equation}\label{eq:intro-pullback-identity}
    (\dbar-\mu\dz)(W\circ f)
    =(1-|\mu|^2)\overline{f_z}\,
      (W_{\bar\zeta}\circ f).
\end{equation}
Consequently, the linearized equation is reduced to a \(\dbar\)-equation in
the target variable.  The coordinate \eqref{eq:intro-coordinate} is exactly
adapted to this reduction, because
\begin{equation}\label{eq:intro-cancellation}
    \frac{\|DF_\nu\|_{\op}}{1-|F(\nu)|^2}
    =\frac{2+|\nu|}{4(1+|\nu|)}
    \le\frac12.
\end{equation}
This cancellation prevents the inverse ellipticity factor from appearing in
the transformed source.

The remaining estimates use two standard analytic inputs from the theory of
mappings of finite distortion.  First, if \(e^{qK_f}\) is integrable and
\(q>8\), then
\[
    J_f\log^8(e+J_f)\in L^1_{\loc};
\]
see \cite[Theorem~20.4.12]{AIM}; the quantitative dependence used here is
visible in the area-distortion argument
\cite[Theorems~20.4.13 and~20.4.14]{AIM}.  Second, the principal degenerate
\(L^2\)-resolvent controls a normalized solution of
\(\dbar w-\mu\dz w=H\) by \(\|K^*H\|_2\), where \(K^*\) is the
Coifman--Rochberg BMO majorant constructed in (20.36) of \cite{AIM}.
The resolvent coefficient \(4\) in (20.39) is universal.  A common
pointwise distortion envelope gives a common dominating majorant, exactly
as in the proof of \cite[Theorem~20.4.7, especially (20.65)]{AIM}.  On a
Luxemburg ball there need not be such an envelope; instead we use the
majorant attached to each coefficient and prove that these majorants have a
uniform positive exponential moment.  This distinction is made explicit in
\cref{subsec:quantitative-AIM}.  The exponent seven that appears later is
dictated by the conjugate pair \(7/5\) and \(7/2\); the margin \(8\) in
\eqref{eq:intro-Lambda} is chosen only to avoid a borderline statement.

At the origin, \(F(0)=0\), \(DF_0=(1/2)I\), and \(f_0=\id\).  Hence the
main theorem gives the strong formula
\[
    D\Sol_0[\eta]=\frac12\Cauchy\eta
    \qquad\text{in }W^{1,2}_{\loc}(\C)
\]
for every \(\eta\in L^\Phi_{\mathcal U}(\C)\).

The restriction to \(\mathscr D_\Lambda\), rather than all of
\(L^\Phi_{\mathcal U}\), is natural for a theorem with values in
\(W^{1,2}_{\loc}\).  Exponential integrability with an arbitrarily small
positive exponent does not force \(W^{1,2}\)-regularity; the radial examples
in \cite[Example~(3)]{AGRS} show failure at the critical
exponent and below.  Thus the high-moment condition records the finite amount of exponential
integrability genuinely needed by the target Sobolev space.

\section{Orlicz coordinates and uniform analytic input}

\subsection{Fixed-support exponential Orlicz spaces}

Let
\begin{equation}\label{eq:Phi}
    \Phi(s)=e^s-s-1.
\end{equation}
For a measurable set \(E\subset\C\), the Luxemburg norm is
\begin{equation}\label{eq:Luxemburg}
    \|g\|_{L^\Phi(E)}
    :=\inf\left\{\lambda>0:
       \int_E\Phi\!\left(\frac{|g|}{\lambda}\right)\dA\le1
       \right\}.
\end{equation}
We use standard facts about Orlicz spaces from \cite{RaoRen}.

\begin{definition}\label{def:fixed-support-space}
We set
\[
    L^\Phi_{\mathcal U}(\C)
    :=\{\nu\in L^\Phi(\C):
       \nu=0\ \text{a.e. on }\C\setminus\mathcal U\}.
\]
The space is equipped with the norm
\(\|\nu\|_\Phi:=\|\nu\|_{L^\Phi(\mathcal U)}\).
\end{definition}

The use of almost-everywhere vanishing, rather than topological support, is
necessary because Orlicz spaces consist of equivalence classes.  We first
record the elementary exponential estimate that will be used repeatedly.

\begin{lemma}[Luxemburg normalization and exponential moments]
\label{lem:luxemburg-exponential}
Let \(E\subset\C\) have finite measure.
\begin{enumerate}
\item If \(u\in L^\Phi(E)\) and \(\|u\|_{L^\Phi(E)}\le1\), then
\begin{equation}\label{eq:unit-modular}
    \int_E\Phi(|u|)\dA\le1.
\end{equation}
\item For every \(0<\theta<1\), there is a constant \(C_\theta\) such that
\begin{equation}\label{eq:subcritical-exp-pointwise}
    e^{\theta s}\le C_\theta\bigl(1+\Phi(s)\bigr)
    \qquad(s\ge0).
\end{equation}
Consequently,
\begin{equation}\label{eq:unit-ball-exp}
    \sup_{\|u\|_{L^\Phi(E)}\le1}
    \int_Ee^{\theta|u|}\dA
    \le C_\theta(|E|+1).
\end{equation}
\item A measurable function \(g\) belongs to \(L^\Phi(E)\) if and only if
\(e^{c|g|}\in L^1(E)\) for some \(c>0\).
\end{enumerate}
\end{lemma}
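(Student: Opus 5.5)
Each of the three parts follows from elementary properties of \(\Phi\) together with the definition \eqref{eq:Luxemburg}, and I will treat them in order.

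For (1), suppose \(\|u\|_{L^\Phi(E)}\le1\). By the definition of the infimum there are \(\lambda_k\downarrow\|u\|\) with \(\int_E\Phi(|u|/\lambda_k)\dA\le1\). If \(\|u\|=0\), then \(u=0\) a.e. and there is nothing to prove. Otherwise \(\Phi\) is continuous and nondecreasing on \([0,\infty)\), so \(\Phi(|u|/\lambda_k)\uparrow\Phi(|u|/\|u\|)\), and monotone convergence gives \(\int_E\Phi(|u|/\|u\|)\dA\le1\). Since \(\|u\|\le1\) and \(\Phi\) is nondecreasing, \(\Phi(|u|)\le\Phi(|u|/\|u\|)\) pointwise, which yields \eqref{eq:unit-modular}.

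For (2), I will show that \(g(s)=e^{\theta s}/(1+\Phi(s))\) is bounded on \([0,\infty)\). The function is continuous and positive there. As \(s\to\infty\) we have \(1+\Phi(s)=e^s-s\ge e^s/2\) for large \(s\), so \(g(s)\le2e^{-(1-\theta)s}\to0\). Hence \(C_\theta:=\sup_{s\ge0}g(s)<\infty\), which is \eqref{eq:subcritical-exp-pointwise}. Substituting \(s=|u|\), integrating over \(E\), and applying (1) gives
\[
\int_E e^{\theta|u|}\dA\le C_\theta\Bigl(|E|+\int_E\Phi(|u|)\dA\Bigr)\le C_\theta(|E|+1),
\]
which is \eqref{eq:unit-ball-exp}.

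For (3), first let \(g\in L^\Phi(E)\). If \(g=0\) a.e. the claim is trivial. Otherwise set \(\lambda=\|g\|>0\) and apply (2) with \(\theta=1/2\) to \(u=g/\lambda\); this gives \(e^{|g|/(2\lambda)}\in L^1(E)\), so we may take \(c=1/(2\lambda)\). Conversely, suppose \(e^{c|g|}\in L^1(E)\). For \(\lambda\ge1/c\) we have \(\Phi(|g|/\lambda)\le e^{|g|/\lambda}\le e^{c|g|}\), so the integrals are finite. As \(\lambda\to\infty\), \(\Phi(|g|/\lambda)\to0\) pointwise while remaining dominated by \(e^{c|g|}\). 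Dominated convergence then makes \(\int_E\Phi(|g|/\lambda)\dA\le1\) for large \(\lambda\), hence \(\|g\|<\infty\). Measurability and finiteness of \(|E|\) (which ensures \(e^{c|g|}\) dominates integrably) are the only hypotheses used.

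The only point that needs any care is the limiting argument in (1): one must check that the infimum in the Luxemburg norm is attained. Monotone convergence handles this, as shown above, and nothing in the lemma presents a serious obstacle.
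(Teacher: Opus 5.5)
Your proposal is correct and follows essentially the same route as the paper. Both arguments use monotone convergence to control the modular at the norm in (1), boundedness of \(e^{\theta s}/(1+\Phi(s))\) in (2), and dominated convergence for the converse in (3). The differences are cosmetic: you pass to the limit at \(\lambda=\|u\|\) and use monotonicity of \(\Phi\) instead of splitting into \(\|u\|<1\) and \(\|u\|=1\), and in (3) you normalize by \(\|g\|\) itself and dominate by \(e^{c|g|}\) rather than \(\Phi(c|g|)\).
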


\begin{proof}
For the first assertion, set
\[
    I(\lambda)=\int_E\Phi(|u|/\lambda)\dA.
\]
The set \(\{\lambda>0:I(\lambda)\le1\}\) is an interval extending to
infinity.  Hence \(I(\lambda)\le1\) for every
\(\lambda>\|u\|_{L^\Phi(E)}\).  If the norm is strictly smaller than one,
we may take \(\lambda=1\).  If the norm equals one, take
\(\lambda_j\downarrow1\) and use monotone convergence to obtain
\eqref{eq:unit-modular}.

To prove \eqref{eq:subcritical-exp-pointwise}, observe that
\(e^{\theta s}/\Phi(s)\to0\) as \(s\to\infty\), because \(\theta<1\).
The quotient is therefore bounded for large \(s\), while on a fixed bounded
interval the left-hand side is bounded by a constant.  This proves the
pointwise inequality, and integration together with
\eqref{eq:unit-modular} gives \eqref{eq:unit-ball-exp}.

Finally, suppose \(g\in L^\Phi(E)\), and choose
\(\lambda>\|g\|_{L^\Phi(E)}\).  Then
\(u=g/\lambda\) satisfies \(\int_E\Phi(|u|)\dA\le1\), so
\eqref{eq:unit-ball-exp}, for example with \(\theta=1/2\), gives
\[
    \int_Ee^{|g|/(2\lambda)}\dA<\infty.
\]
Conversely, suppose that \(e^{c|g|}\in L^1(E)\).  Then
\(\Phi(c|g|)\) is integrable.  If \(\lambda\ge c^{-1}\), then
\(\Phi(|g|/\lambda)\le\Phi(c|g|)\).  The left-hand side decreases
pointwise to zero as \(\lambda\to\infty\), so dominated convergence gives
\(\int_E\Phi(|g|/\lambda)\dA\le1\) for all sufficiently large
\(\lambda\).  Hence \(g\in L^\Phi(E)\).
\end{proof}

\begin{lemma}[Uniform exponential moment on a small Luxemburg ball]
\label{lem:small-ball-moment}
Let \(\nu_*\in L^\Phi_{\mathcal U}(\C)\), and suppose
\[
    \int_{\mathcal U}e^{q_1|\nu_*|}\dA<\infty
\]
for some \(q_1>q>0\).  Then there exist \(r,C>0\) such that
\begin{equation}\label{eq:small-ball-moment}
    \sup_{\|\nu-\nu_*\|_\Phi<r}
    \int_{\mathcal U}e^{q|\nu|}\dA\le C.
\end{equation}
Consequently, \(\mathscr D_\Lambda\) is open in
\(L^\Phi_{\mathcal U}(\C)\).
\end{lemma}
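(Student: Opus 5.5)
The idea is to split \(\nu=\nu_*+(\nu-\nu_*)\) and use Hölder's inequality in exponential form. Fix an exponent \(s\in(1,q_1/q)\), so that \(qs<q_1\), and let \(s'=s/(s-1)\) be its conjugate. Pointwise, \(|\nu|\le|\nu_*|+|\nu-\nu_*|\), hence
\[
    \int_{\mathcal U}e^{q|\nu|}\dA
    \le\Bigl(\int_{\mathcal U}e^{qs|\nu_*|}\dA\Bigr)^{1/s}
       \Bigl(\int_{\mathcal U}e^{qs'|\nu-\nu_*|}\dA\Bigr)^{1/s'}.
\]
The first factor is finite because \(qs<q_1\) and \(|\mathcal U|<\infty\), so \(e^{qs|\nu_*|}\le 1+e^{q_1|\nu_*|}\). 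It remains to bound the second factor uniformly on a small Luxemburg ball.

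For the second factor, set \(u=(\nu-\nu_*)/r\). If \(\|\nu-\nu_*\|_\Phi<r\), then \(\|u\|_{L^\Phi(\mathcal U)}\le1\). Now choose \(r>0\) so small that \(qs'r\le\theta:=1/2\). Then
\[
    e^{qs'|\nu-\nu_*|}=e^{qs'r|u|}\le e^{\theta|u|}.
\]
By \eqref{eq:unit-ball-exp} of \cref{lem:luxemburg-exponential}, the integral of this over \(\mathcal U\) is at most \(C_\theta(|\mathcal U|+1)\). Combining the two factors gives \eqref{eq:small-ball-moment} with an explicit constant \(C\).

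For openness of \(\mathscr D_\Lambda\), take \(\nu_*\in\mathscr D_\Lambda\), so \(p(\nu_*)>\Lambda\). Pick \(q_1\) and \(q\) with \(\Lambda<q<q_1<p(\nu_*)\). The set \(\{p>0:\int e^{p|\nu_*|}<\infty\}\) is an interval starting at \(0\), since \(|\mathcal U|<\infty\). Hence \(\int e^{q_1|\nu_*|}<\infty\), and the first part of the lemma applies. Every \(\nu\) in the resulting ball satisfies \(\int e^{q|\nu|}<\infty\), so \(p(\nu)\ge q>\Lambda\). The ball therefore lies in \(\mathscr D_\Lambda\), which is open.

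There is no real obstacle. The only care needed is in the exponent bookkeeping: \(s\) must be strictly between \(1\) and \(q_1/q\), and \(r\) must be chosen in terms of \(s'\) so that the subcritical exponent \(\theta<1\) of \cref{lem:luxemburg-exponential} applies.
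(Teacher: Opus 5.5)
Your proposal is correct and follows essentially the same route as the paper: the H\"older splitting $e^{q|\nu|}\le e^{q|\nu_*|}e^{q|\nu-\nu_*|}$ with an exponent $s$ satisfying $qs<q_1$, a choice of $r$ with $qs'r<1$ so that the unit-ball exponential bound of \cref{lem:luxemburg-exponential} controls the second factor uniformly, and the choice $\Lambda<q<q_1<p(\nu_*)$ for openness. You also spell out one step the paper leaves implicit: finiteness at $q_1$ follows from $q_1<p(\nu_*)$ because the admissible exponents form an interval when $|\mathcal U|<\infty$.
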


\begin{proof}
Choose \(a>1\) so close to one that \(qa<q_1\), and let
\(a'=a/(a-1)\).  For \(\nu=\nu_*+h\), the triangle inequality in the
exponent and H\"older's inequality give
\begin{align}
    \int_{\mathcal U}e^{q|\nu|}\dA
    &\le \int_{\mathcal U}e^{q|\nu_*|}e^{q|h|}\dA\notag\\
    &\le
    \left(\int_{\mathcal U}e^{qa|\nu_*|}\dA\right)^{1/a}
    \left(\int_{\mathcal U}e^{qa'|h|}\dA\right)^{1/a'}.
    \label{eq:small-ball-holder}
\end{align}
The first factor is finite because \(qa<q_1\).  Choose \(r>0\) so small
that
\begin{equation}\label{eq:small-ball-r-choice}
    \theta:=qa'r<1.
\end{equation}
If \(\|h\|_\Phi<r\), then \(u=h/r\) has Luxemburg norm strictly less
than one.  By \cref{lem:luxemburg-exponential},
\[
    \int_{\mathcal U}e^{qa'|h|}\dA
    =\int_{\mathcal U}e^{\theta|u|}\dA
    \le C_\theta(|\mathcal U|+1),
\]
uniformly in \(h\).  Substitution into \eqref{eq:small-ball-holder} proves
\eqref{eq:small-ball-moment}.

Now assume \(p(\nu_*)>\Lambda\).  Choose exponents
\[
    \Lambda<q<q_1<p(\nu_*).
\]
The preceding argument provides a ball on which every \(\nu\) has a finite
\(q\)-moment.  Thus \(p(\nu)\ge q>\Lambda\) throughout this ball, which
proves that \(\mathscr D_\Lambda\) is open.
\end{proof}

\subsection{David coefficients in the Orlicz coordinate}

For a measurable Beltrami coefficient \(\mu\), write
\[
    K_\mu=\frac{1+|\mu|}{1-|\mu|}.
\]
A compactly supported coefficient is of David type if there exist
\(C,c>0\) such that
\[
    |\{K_\mu>t\}|\le Ce^{-ct}
    \qquad(t\ge1).
\]

\begin{proposition}[David--Orlicz equivalence]
\label{prop:david-orlicz}
Let \(\nu\) vanish almost everywhere outside \(\mathcal U\).  Then
\(F(\nu)\) is a compactly supported David coefficient if and only if
\(\nu\in L^\Phi_{\mathcal U}(\C)\).
\end{proposition}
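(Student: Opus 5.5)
The plan is to reduce both directions to the distortion identity \eqref{eq:intro-distortion}, \(K_{F(\nu)}=1+|\nu|\), and then to compare exponential decay of distribution functions with exponential integrability via the layer-cake formula and \cref{lem:luxemburg-exponential}(3).

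First, the preliminaries.
\begin{enumerate}
\item Since \(F(0)=0\), the coefficient \(\mu=F(\nu)\) vanishes a.e.\ outside \(\mathcal U\), so it is compactly supported because \(\mathcal U\) is bounded.
\item Since \(|F(\nu)|=|\nu|/(2+|\nu|)<1\) wherever \(\nu\) is finite, \(\mu\) is a measurable Beltrami coefficient with \(|\mu|<1\) a.e.
\item Outside \(\mathcal U\) we have \(K_\mu=1\) a.e., so for \(t\ge1\)
\[
    \{K_\mu>t\}=\{z\in\mathcal U:|\nu(z)|>t-1\}
\]
up to a null set.
\end{enumerate}

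Next, the direction \((\Leftarrow)\). Assume \(\nu\in L^\Phi_{\mathcal U}(\C)\). By \cref{lem:luxemburg-exponential}(3) with \(E=\mathcal U\), there is \(c>0\) with \(M:=\int_{\mathcal U}e^{c|\nu|}\dA<\infty\). Chebyshev's inequality then gives
\[
    |\{K_\mu>t\}|=|\{|\nu|>t-1\}|\le Me^{-c(t-1)}=(Me^{c})e^{-ct}
    \qquad(t\ge1),
\]
which is exactly the David condition.

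Then the direction \((\Rightarrow)\). Assume \(|\{K_\mu>t\}|\le Ce^{-ct}\) for \(t\ge1\). First, \(\nu\) is finite a.e., since \(|\nu|=\infty\) would force \(K_\mu=\infty\), and that set has measure at most \(\inf_t Ce^{-ct}=0\). Fix \(0<b<c\) and apply the layer-cake formula on \(\mathcal U\):
\[
    \int_{\mathcal U}e^{b|\nu|}\dA
    =|\mathcal U|+\int_0^\infty be^{bs}\,|\{z\in\mathcal U:|\nu|>s\}|\,ds
    \le|\mathcal U|+\int_0^\infty be^{bs}Ce^{-c(s+1)}\,ds<\infty.
\]
Here we substituted \(t=s+1\ge1\). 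Since \(\mathcal U\) has finite measure, \cref{lem:luxemburg-exponential}(3) gives \(\nu\in L^\Phi(\mathcal U)\). Because \(\nu=0\) a.e.\ off \(\mathcal U\), it also lies in \(L^\Phi(\C)\) with the same norm, so \(\nu\in L^\Phi_{\mathcal U}(\C)\).

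The only real obstacle is bookkeeping. One must check that the David condition is insensitive to the part of the plane where \(\mu=0\), which holds because \(K_\mu=1\le t\) there. One must also handle \(t\) near \(1\), which is absorbed into the constant \(C\). There is no genuine analytic difficulty.
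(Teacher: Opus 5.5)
Your proof is correct and follows the paper's argument essentially line by line. Both use the identity \(K_{F(\nu)}=1+|\nu|\) and Chebyshev's inequality against \(\int_{\mathcal U}e^{c|\nu|}\dA\) for one direction; for the other, both use the layer-cake formula with an exponent \(0<b<c\) followed by \cref{lem:luxemburg-exponential}(3). Your extra remarks (a.e.\ finiteness of \(\nu\), and that extension by zero preserves the Luxemburg norm because \(\Phi(0)=0\)) are harmless bookkeeping that the paper leaves implicit.
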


\begin{proof}
Put \(r=|\nu|\).  Since
\[
    |F(\nu)|=\frac{r}{2+r},
\]
we have \(|F(\nu)|<1\) almost everywhere and
\begin{align}
    K_{F(\nu)}
    &=\frac{1+r/(2+r)}{1-r/(2+r)}
      =\frac{2+2r}{2}
      =1+r.
    \label{eq:david-orlicz-distortion}
\end{align}
Moreover, \(F(\nu)=0\) almost everywhere off \(\mathcal U\), so its
essential support is contained in the compact set \(\overline{\mathcal U}\).

Assume first that \(\nu\in L^\Phi_{\mathcal U}(\C)\).  By
\cref{lem:luxemburg-exponential}, there is \(c>0\) such that
\[
    M_c:=\int_{\mathcal U}e^{c|\nu|}\dA<\infty.
\]
For \(t\ge1\), Chebyshev's inequality and
\eqref{eq:david-orlicz-distortion} yield
\begin{align*}
    \bigl|\{K_{F(\nu)}>t\}\bigr|
    &=\bigl|\{|\nu|>t-1\}\bigr|\\
    &\le e^{-c(t-1)}\int_{\mathcal U}e^{c|\nu|}\dA
      =e^cM_c e^{-ct}.
\end{align*}
Thus \(F(\nu)\) is a David coefficient.

Conversely, suppose that \(F(\nu)\) is of David type.  Then there are
\(C_0,c_0>0\) such that, for every \(s\ge0\),
\begin{equation}\label{eq:nu-tail-from-david}
    \bigl|\{|\nu|>s\}\bigr|
    =\bigl|\{K_{F(\nu)}>1+s\}\bigr|
    \le C_0e^{-c_0(1+s)}.
\end{equation}
Choose \(0<c<c_0\).  For a nonnegative number \(x\),
\[
    e^{cx}-1=\int_0^x ce^{cs}\,ds.
\]
Applying Tonelli's theorem with \(x=|\nu(z)|\), and then using
\eqref{eq:nu-tail-from-david}, gives the layer-cake computation
\begin{align*}
    \int_{\mathcal U}(e^{c|\nu|}-1)\dA
    &=c\int_0^\infty e^{cs}
       \bigl|\{|\nu|>s\}\bigr|\,ds\\
    &\le cC_0e^{-c_0}
       \int_0^\infty e^{-(c_0-c)s}\,ds<\infty.
\end{align*}
Hence \(e^{c|\nu|}\in L^1(\mathcal U)\), and another application of
\cref{lem:luxemburg-exponential} shows that
\(\nu\in L^\Phi_{\mathcal U}(\C)\).
\end{proof}

\subsection{Quantitative BMO majorants and the principal resolvent}
\label{subsec:quantitative-AIM}

We record carefully the uniformity supplied by
\cite[Section~20.4.2]{AIM}.  This is important because two different forms
of uniformity occur below.  Along a fixed one-parameter path there is a
common pointwise distortion envelope.  On a Luxemburg ball there is in
general no common pointwise envelope, but there is a common exponential
moment.  The latter is sufficient once one allows the BMO majorant to depend
on the coefficient.

Choose a disk \(B_{R_0}\) containing \(\overline{\mathcal U}\).  The
statements in \cite{AIM} are normalized to the unit disk; translation and
dilation by this fixed disk preserve the form of the estimates.  For a
coefficient \(\mu\) vanishing off \(\mathcal U\), let
\[
    \widetilde K_\mu=K_\mu\chi_{B_{R_0}}.
\]
Thus \(\widetilde K_\mu=K_\mu\) on the fixed support disk, including
the region where \(\mu=0\) and \(K_\mu=1\), and it is set equal to zero
only outside \(B_{R_0}\).  This is the extension used in
\cite[(20.36)]{AIM}.

\begin{lemma}[Uniform Coifman--Rochberg majorants]
\label{lem:uniform-AIM-majorants}
Let \(q>p_{\mathrm{res}}\), \(M_0>0\), and let \(\mathcal A\) be a family
of Beltrami coefficients vanishing outside \(\mathcal U\) such that
\begin{equation}\label{eq:uniform-moment-majorant}
    \sup_{\mu\in\mathcal A}
    \int_{\mathcal U}e^{qK_\mu}\dA\le M_0.
\end{equation}
Fix
\begin{equation}\label{eq:alpha-choice}
    p_{\mathrm{res}}<\alpha<q.
\end{equation}
For each \(\mu\in\mathcal A\), define
\begin{align}
    G_\mu(z)
    &:=e^{\alpha\widetilde K_\mu(z)}(1+|z|)^{-3},
    \label{eq:G-mu}\\
    \kappa_\mu(z)
    &:=\frac1\alpha\log \HL G_\mu(z)
       +\frac3\alpha\log(1+|z|),
    \label{eq:kappa-mu}
\end{align}
where \(\HL\) denotes the Hardy--Littlewood maximal operator.  Then
\begin{align}
    \kappa_\mu&\ge K_\mu
       &&\text{a.e. on }B_{R_0},
       \label{eq:kappa-majorizes}\\
    [\kappa_\mu]_{\BMO(\C)}&\le \frac{C_2}{\alpha},
       \label{eq:kappa-BMO}
\end{align}
where \(C_2\) is universal.  Moreover, for every \(0<\sigma<\alpha\),
\begin{equation}\label{eq:kappa-uniform-exp}
    \sup_{\mu\in\mathcal A}
    \int_{B_{R_0}}e^{\sigma\kappa_\mu}\dA
    \le C(B_{R_0},\mathcal U,q,\alpha,\sigma,M_0).
\end{equation}
\end{lemma}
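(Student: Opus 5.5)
The plan has three parts, one for each conclusion.

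\emph{Majorization \eqref{eq:kappa-majorizes}.} This follows from the Lebesgue differentiation theorem. Since \(G_\mu\in L^1_{\loc}\), we have \(\HL G_\mu\ge G_\mu\) almost everywhere. Taking logarithms gives
\[
\alpha\kappa_\mu=\log\HL G_\mu+3\log(1+|z|)\ge \alpha\widetilde K_\mu .
\]
On \(B_{R_0}\) we have \(\widetilde K_\mu=K_\mu\), which is the claim. The weight \((1+|z|)^{-3}\) only serves to make \(G_\mu\) integrable on \(\C\). Indeed, \(G_\mu=(1+|z|)^{-3}\) off \(B_{R_0}\), and on \(B_{R_0}\) it is bounded by \(e^{\alpha}|B_{R_0}\setminus\mathcal U|\) plus the moment \(\int_{\mathcal U}e^{qK_\mu}\). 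Hence \(\HL G_\mu\) is finite almost everywhere and \(\kappa_\mu\) is well defined.

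\emph{BMO bound \eqref{eq:kappa-BMO}.} I will use the Coifman--Rochberg theorem. For any locally integrable \(G\) with \(\HL G<\infty\) a.e., \(\log \HL G\in\BMO\) with a universal bound on its norm. Moreover, \(\log(1+|z|)\in\BMO(\C)\) with a universal norm, as one checks by comparison with \(\log|z|\). Dividing by \(\alpha\) gives \([\kappa_\mu]_{\BMO}\le C_2/\alpha\) with \(C_2\) universal. This is exactly the content of \cite[Corollary~20.4.2]{AIM} and the construction (20.36), transplanted to \(B_{R_0}\) by a fixed dilation, which leaves BMO norms unchanged.

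\emph{Uniform exponential bound \eqref{eq:kappa-uniform-exp}.} This step carries the real content, since the majorant depends on \(\mu\). On \(B_{R_0}\), the factor \((1+|z|)^{3/\alpha}\) is bounded by a constant depending only on \(R_0\). Thus
\[
e^{\sigma\kappa_\mu}\le C(R_0)\,(\HL G_\mu)^{\sigma/\alpha},
\]
and the task reduces to bounding \(\int_{B_{R_0}}(\HL G_\mu)^{s}\) uniformly in \(\mu\), where \(s=\sigma/\alpha<1\). Two tools are available.

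\begin{enumerate}
\item Split \(G_\mu=G_\mu\chi_{2B}+G_\mu\chi_{\C\setminus 2B}\) with \(B=B_{R_0}\). The far part has maximal function bounded on \(B\) by a constant depending only on \(R_0\), since there \(G_\mu=(1+|z|)^{-3}\).
\item For the local part, use the \(L^t\)-boundedness of \(\HL\) with \(t=q/\alpha>1\):
\[
\int_{B}(\HL(G_\mu\chi_{2B}))^{s}\le |B|^{1-s/t}\Bigl(\int (\HL(G_\mu\chi_{2B}))^{t}\Bigr)^{s/t}\le C\Bigl(\int_{2B}G_\mu^{t}\Bigr)^{s/t}.
\]
Here \(s<1<t\), so Hölder's inequality applies.
\end{enumerate}

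It remains to bound the last integral. Since \(G_\mu^t\le e^{q\widetilde K_\mu}\), the integral over \(2B\) is at most \(\int_{\mathcal U}e^{qK_\mu}+e^{q}|2B|\le M_0+e^q|2B|\), using \(K_\mu=1\) off \(\mathcal U\). This bound is uniform over \(\mathcal A\). Alternatively, one could use Kolmogorov's weak-type \(L\log L\) inequality for \(s<1\), but the \(L^t\) route is cleaner.

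The main obstacle is this uniformity. It is handled exactly because \(\alpha<q\) makes \(G_\mu\) lie in \(L^{q/\alpha}\) near the support, with a norm controlled by \(M_0\). The required dependence of the constant on \(B_{R_0},\mathcal U,q,\alpha,\sigma,M_0\) then follows.
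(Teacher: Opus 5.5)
Your proof is correct. The first two parts coincide with the paper's: Lebesgue differentiation gives \eqref{eq:kappa-majorizes}, and Coifman--Rochberg together with \(\log(1+|z|)\in\BMO\) gives \eqref{eq:kappa-BMO}. Your explicit check that \(G_\mu\in L^1(\C)\), so that \(\HL G_\mu<\infty\) a.e., is a useful detail the paper leaves implicit.

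The genuine difference is in the uniform exponential bound. Both arguments reduce to bounding \(\int_{B_{R_0}}(\HL G_\mu)^{s}\) with \(s=\sigma/\alpha<1\).

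\emph{The paper's route.} It applies Kolmogorov's inequality, a consequence of the weak type \((1,1)\) of \(\HL\), directly to \(G_\mu\in L^1(\C)\). It therefore needs only a uniform bound on \(\|G_\mu\|_1\), that is, on \(\int_{\mathcal U}e^{\alpha K_\mu}\dA\). The hypothesis \(\alpha<q\) is used only to dominate this by \(M_0\); an \(\alpha\)-moment alone would suffice for this step.

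\emph{Your route.} You use the strong type \((t,t)\) bound with \(t=q/\alpha>1\), followed by H\"older. This needs the strict gap \(\alpha<q\) in an essential way and uses the full \(q\)-moment. In exchange, it uses nothing beyond the Hardy--Littlewood maximal theorem.

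Both routes give a constant depending on the same data.

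Two small remarks. First, your near/far splitting is unnecessary: \((1+|z|)^{-3t}\) is integrable on \(\C\) because \(3t>2\), so \(G_\mu\in L^{t}(\C)\) already. Second, Kolmogorov's inequality comes from the weak type \((1,1)\) estimate, not from an \(L\log L\) inequality; an \(L\log L\) condition is what one would need for the endpoint \(s=1\).
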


\begin{proof}
At almost every point, \(\HL G_\mu\ge G_\mu\).  Hence, on \(B_{R_0}\),
\[
    \kappa_\mu
    \ge \frac1\alpha\log G_\mu
       +\frac3\alpha\log(1+|z|)
    =K_\mu,
\]
which proves \eqref{eq:kappa-majorizes}.  The BMO estimate
\eqref{eq:kappa-BMO} is the Coifman--Rochberg construction
\cite[(20.36)]{AIM} and \cite[Corollary~20.4.2]{AIM}.

For \eqref{eq:kappa-uniform-exp}, put \(r=\sigma/\alpha\in(0,1)\).  On
\(B_{R_0}\),
\[
    e^{\sigma\kappa_\mu}
    =(\HL G_\mu)^r(1+|z|)^{3r}
    \le C_{R_0}(\HL G_\mu)^r.
\]
Kolmogorov's inequality gives
\[
    \int_{B_{R_0}}(\HL G_\mu)^r\dA
    \le C(r,R_0)\|G_\mu\|_1^r.
\]
Now \(G_\mu=(1+|z|)^{-3}\) off \(B_{R_0}\), while
\(K_\mu=1\) on \(B_{R_0}\setminus\mathcal U\).  Therefore
\[
    \|G_\mu\|_1
    \le C(R_0,\mathcal U,\alpha)
       +\int_{\mathcal U}e^{\alpha K_\mu}\dA
    \le C(R_0,\mathcal U,q,\alpha,M_0),
\]
because \(\alpha<q\).  This proves the uniform exponential estimate.
\end{proof}

\begin{proposition}[Principal degenerate \(L^2\)-resolvent]
\label{prop:uniform-principal-resolvent}
Under the hypotheses and notation of
\cref{lem:uniform-AIM-majorants}, let \(H\) vanish outside
\(\mathcal U\), and suppose \(\kappa_\mu H\in L^2(\C)\).  Then
\begin{equation}\label{eq:omega-resolvent}
    \omega-\mu\Beurling\omega=H
\end{equation}
has a unique solution \(\omega\in L^2(\C)\), and
\begin{equation}\label{eq:AIM-resolvent-4}
    \|\omega\|_2\le4\|\kappa_\mu H\|_2.
\end{equation}
Consequently, \(w=\Cauchy\omega\) is the unique principally normalized
solution of
\begin{equation}\label{eq:principal-resolvent-equation}
    \dbar w-\mu\dz w=H,
    \qquad
    w(z)=O(1/z),
\end{equation}
and
\begin{equation}\label{eq:principal-resolvent-estimate}
    \|\dbar w\|_2+\|\dz w\|_2
    \le8\|\kappa_\mu H\|_2.
\end{equation}
The numerical constants in
\eqref{eq:AIM-resolvent-4}--\eqref{eq:principal-resolvent-estimate} are
independent of \(\mu\) and of the moment bound \(M_0\), once the common
choice of \(\alpha\) in \eqref{eq:alpha-choice} is fixed.  The moment bound
enters later only through
\eqref{eq:kappa-uniform-exp}.
\end{proposition}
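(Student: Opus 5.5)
The plan is to reproduce the argument of \cite[Theorem~20.4.3]{AIM} while tracking the constants, with the single majorant \(\kappa_\mu\) of \cref{lem:uniform-AIM-majorants} in place of a common envelope. Two facts drive it. First, \(\mu\) vanishes off \(\mathcal U\) and \(|\mu|\le (K_\mu-1)/(K_\mu+1)\). Second, \(\kappa_\mu\ge K_\mu\ge1\) on \(B_{R_0}\), by \eqref{eq:kappa-majorizes}. I would treat existence and the estimate together through a truncation and weighted-\(L^2\) scheme, and get uniqueness from an a priori estimate in the same weighted norm.

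\emph{Approximation.} Truncate to \(\mu_n=\mu\chi_{\{K_\mu\le n\}}\). These coefficients are uniformly elliptic, so \(\omega_n=(I-\mu_n\Beurling)^{-1}H\in L^2\) exists by the Neumann series and \(L^2\) invertibility. One checks that \(K_{\mu_n}\le K_\mu\), so \(\kappa_\mu\) still majorizes \(K_{\mu_n}\). The weight \(\kappa_\mu\) therefore serves every \(n\) at once; this is exactly why a majorant attached to \(\mu\) is enough.

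\emph{Key estimate.} For each \(n\) I want \(\|\omega_n\|_2\le 4\|\kappa_\mu H\|_2\), with the constant independent of \(n\). Following AIM, write
\[
    \omega_n=\mu_n\Beurling\omega_n+H
\]
and use \(|\mu_n|\le 1-1/\kappa_\mu\) pointwise. Then \(|\omega_n|\le(1-1/\kappa)|\Beurling\omega_n|+|H|\).
\begin{itemize}
\item Since \(\Beurling\) is an \(L^2\) isometry, it remains to control \(\int \kappa^{-1}|\Beurling\omega_n|^2\) from below by a multiple of \(\int\kappa^{-1}|\omega_n|^2\). This should follow from the weighted bound for \(\Beurling\) on \(L^2(e^{\pm\epsilon\kappa})\) that the BMO estimate \eqref{eq:kappa-BMO} makes available. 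That weighted bound holds because \(\alpha>p_{\mathrm{res}}=8C_1C_2\) makes \(e^{\kappa/\cdot}\) an \(A_2\) weight with controlled constant, via \cite[Theorem~20.4.4, Corollary~20.4.2]{AIM}.
\item Squaring, and using \((1-1/\kappa)^2\le 1-1/\kappa\), the inequality closes as in \cite[(20.39)]{AIM} and yields the universal factor \(4\).
\end{itemize}
I expect this step to be the main obstacle. The difficulty is to verify that every constant depends only on \([\kappa_\mu]_{\BMO}\le C_2/\alpha\) and the universal \(C_1\), not on \(M_0\). If it cannot be made fully sharp, I would fall back to citing the AIM computation verbatim, since it uses only the majorant inequality and the BMO bound.

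\emph{Limit.} Pass \(\omega_n\rightharpoonup\omega\) weakly in \(L^2\). Then \(\mu_n\Beurling\omega_n\to\mu\Beurling\omega\) in distributions, because \(\mu_n\to\mu\) boundedly a.e. and \(\Beurling\omega_n\rightharpoonup\Beurling\omega\). Hence \(\omega\) solves \eqref{eq:omega-resolvent}, and \eqref{eq:AIM-resolvent-4} follows by lower semicontinuity.

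\emph{Uniqueness.} Apply the same a priori estimate with \(H=0\): any \(L^2\) solution of the homogeneous equation vanishes. This uses the standard Liouville-type argument in AIM for \(L^2\) solutions with majorized distortion.

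\emph{Passage to \(w\).} Set \(w=\Cauchy\omega\). Then \(\dbar w=\omega\) and \(\dz w=\Beurling\omega\), so \eqref{eq:principal-resolvent-equation} holds. Since \(\omega\) has compact support (\(\omega=H+\mu\Beurling\omega\) vanishes off \(\mathcal U\)), \(w=O(1/z)\). Isometry of \(\Beurling\) gives \eqref{eq:principal-resolvent-estimate} with constant \(8\). Uniqueness of \(w\) follows, because any principal solution has \(\dbar w\in L^2\) solving \eqref{eq:omega-resolvent}.
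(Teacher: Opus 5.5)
Your architecture (truncation to uniformly elliptic \(\mu_n\) with \(K_{\mu_n}\le K_\mu\le\kappa_\mu\), a weak limit, and the passage to \(w=\Cauchy\omega\)) is the one the paper cites from \cite{AIM}. However, the key estimate, as you propose to prove it, fails. Integrating the squared inequality \(|\omega_n|\le(1-1/\kappa)|\Beurling\omega_n|+|H|\) with constant weight and using \(\|\Beurling\omega_n\|_2=\|\omega_n\|_2\) gives at best \(\int\kappa^{-1}|\Beurling\omega_n|^2\dA\lesssim\int\kappa|H|^2\dA\). Even a two-sided bound for \(\Beurling\) on \(L^2(\kappa^{-1})\) would then only yield \(\int\kappa^{-1}|\omega_n|^2\dA\lesssim\int\kappa|H|^2\dA\). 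That does not control \(\|\omega_n\|_2\), because \(\kappa\) is unbounded. So you get neither \eqref{eq:AIM-resolvent-4} nor the uniform bound that your existence step needs.

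Besides, the weights for which you invoke \(A_2\) theory are \(e^{\pm\epsilon\kappa}\), not \(\kappa^{-1}\). Also, \cite[Theorem~20.4.4]{AIM} is not an \(A_2\) statement: it is the Jacobian--BMO estimate, which is where \(C_1\) comes from. (The isometry argument does prove uniqueness: with \(H=0\) it forces \(\int(1-|\mu|^2)|\Beurling\omega|^2\dA\le0\), hence \(\omega=0\).)

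The missing idea is to multiply the pointwise inequality by the weight before integrating, and to recognize \(|\Beurling\omega|^2-|\omega|^2=J(z,\Cauchy\omega)\). Use \(|\mu|\le(K-1)/(K+1)\), \(\kappa\ge K\ge1\) on \(\mathcal U\), and \(\omega=H=0\) off \(\mathcal U\). Then one gets pointwise
\[
    |\Beurling\omega|^2\le2(\kappa+1)\,J(z,\Cauchy\omega)+12\kappa^2|H|^2 .
\]
Two facts now apply: \(\int J(z,\Cauchy\omega)\dA=0\), and \(J(z,\Cauchy\omega)\in\mathcal H^1\). The Jacobian--BMO estimate therefore bounds \(\int(\kappa+1)J(z,\Cauchy\omega)\dA\) by \(C_1[\kappa]_{\BMO}\|\omega\|_2^2\le C_1C_2\alpha^{-1}\|\omega\|_2^2\), in the normalization behind \(p_{\mathrm{res}}=8C_1C_2\). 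For \(\alpha>8C_1C_2\) this term is absorbed into \(\|\Beurling\omega\|_2^2=\|\omega\|_2^2\). What remains is \(\|\omega\|_2^2\le16\|\kappa H\|_2^2\), a bound independent of \(\mu\) and \(M_0\).

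This is (20.39) in \cite{AIM}, which the paper simply cites after rescaling, together with the truncation (20.47). Your fallback of quoting AIM verbatim is therefore the paper's proof. But its justification must include this Jacobian--BMO absorption, not only the majorant inequality and the BMO bound.
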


\begin{proof}
After translating and dilating the fixed support disk to the unit disk,
\eqref{eq:AIM-resolvent-4} is (20.39) in
\cite[Theorem~20.4.3]{AIM}.  Its proof uses
\(\alpha>8C_1C_2=p_{\mathrm{res}}\) to absorb the Jacobian--BMO term, so
the coefficient \(4\) is universal; existence follows by the truncation
argument in (20.47).

Since \(\mu=H=0\) off \(\mathcal U\), equation
\eqref{eq:omega-resolvent} gives \(\omega=0\) there.  Thus
\(\omega\in L^1\cap L^2\) has bounded support.  For
\(w=\Cauchy\omega\),
\[
    \dbar w=\omega,
    \qquad
    \dz w=\Beurling\omega,
\]
so \eqref{eq:omega-resolvent} is equivalent to
\(\dbar w-\mu\dz w=H\).  The bounded support gives
\(w(z)=O(1/z)\), and the \(L^2\)-isometry of \(\Beurling\) yields
\[
    \|\dbar w\|_2+\|\dz w\|_2
    =2\|\omega\|_2
    \le8\|\kappa_\mu H\|_2.
\]
Uniqueness is the uniqueness assertion in the cited resolvent theorem.
\end{proof}

\begin{remark}[Common envelopes versus moment-bounded families]
\label{rem:envelope-versus-moment}
If a family satisfies the stronger pointwise condition
\[
    K_\mu\le\widehat K
    \qquad\text{a.e. on }B_{R_0},
\]
then the monotonicity of the maximal operator in
\eqref{eq:kappa-mu} gives
\(\kappa_\mu\le\widehat\kappa\), where \(\widehat\kappa\) is constructed
from \(\widehat K\).  Thus the common weight \(\widehat\kappa\) controls
all resolvent norms.  This is the mechanism used for the good truncations in
the proof of \cite[Theorem~20.4.7, especially (20.65)]{AIM}.

A Luxemburg ball need not admit any integrable common pointwise envelope.
Accordingly, we do not claim that the same function \(\kappa\) works for all
coefficients in such a ball.  We use the coefficient-dependent functions
\(\kappa_\mu\).  The resolvent coefficient remains the universal number
\(4\), while \cref{lem:uniform-AIM-majorants} supplies the uniform
exponential moment required by the product estimates.  This is the precise
sense in which the resolvent estimates below are uniform on a Luxemburg
ball.
\end{remark}

\subsection{Uniform principal mappings and Jacobian bounds}

\begin{proposition}[Uniform principal estimates]
\label{prop:analytic-package}
Let \(q>\max\{p_{\mathrm{res}},8\}\) and \(M_0>0\).  Let
\(\mathcal A\) be a family of Beltrami coefficients vanishing outside
\(\mathcal U\) such that
\begin{equation}\label{eq:uniform-moment-family}
    \sup_{\mu\in\mathcal A}
    \int_{\mathcal U}e^{qK_\mu}\dA\le M_0.
\end{equation}
For each \(\mu\in\mathcal A\), let \(f^\mu\) be its principal solution.
Then the following assertions hold uniformly in \(\mu\).

\begin{enumerate}
\item The map \(f^\mu\) is a homeomorphism in
\(W^{1,2}_{\loc}(\C)\), and it has the properties \(N\) and \(N^{-1}\).

\item For every disk \(B_R\) containing \(\overline{\mathcal U}\),
\begin{equation}\label{eq:uniform-J-log8}
    \sup_{\mu\in\mathcal A}
    \int_{B_R}J_{f^\mu}\log^8(e+J_{f^\mu})\dA<\infty.
\end{equation}

\item Fix \(\alpha\) as in \eqref{eq:alpha-choice} and choose
\(0<\sigma<\alpha\).  The majorants \(\kappa_\mu\) in
\eqref{eq:kappa-mu} satisfy the uniform exponential estimate
\eqref{eq:kappa-uniform-exp}, and the principal resolvent estimate
\eqref{eq:principal-resolvent-estimate} holds for each coefficient with the
same numerical constant.
\end{enumerate}
\end{proposition}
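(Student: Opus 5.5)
The proof splits along the three assertions, and in each I would pass through the cited theory of mappings of exponentially integrable distortion in \cite{AIM} and \cite{AGRS}. The aim is to check that every constant depends on \(\mu\) only through the moment bound \(M_0\), \(q\), \(\mathcal U\) and \(R\).

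For (1), the hypothesis \eqref{eq:uniform-moment-family} with \(q>8>1\) puts each \(\mu\) in the David class, by the Chebyshev argument of \cref{prop:david-orlicz} with \(K_\mu\) in place of \(1+|\nu|\). The existence and uniqueness theorem for principal solutions of degenerate equations with exponentially integrable distortion \cite[Theorem~20.4.7]{AIM} then gives a principal homeomorphic solution \(f^\mu\in W^{1,1}_{\loc}\). Because the exponent exceeds the critical value, \cite[Corollary~1.2]{AGRS} upgrades this to \(W^{1,2}_{\loc}\). Lusin's condition \(N\) holds for \(W^{1,2}_{\loc}\) homeomorphisms. Condition \(N^{-1}\), equivalently \(J_{f^\mu}>0\) a.e., holds for mappings of exponentially integrable distortion \cite[Chapter~20]{AIM}. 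Assertion (1) is purely qualitative, so no uniformity is needed here.

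For (2), I would reread the area-distortion argument of \cite[Theorems~20.4.12--20.4.14]{AIM} and track its constants. That argument bounds \(\int_{B_R}J_f\log^8(e+J_f)\) by a quantity depending only on \(\int_{\mathcal U}e^{qK}\), \(q\), and the normalization \(f(z)=z+O(1/z)\). I would make two dependencies explicit. First, the principal normalization together with the Koebe-type distortion for principal maps gives a uniform bound \(f^\mu(B_R)\subset B_{R'}\), with \(R'\) depending only on \(R\) and \(M_0\). This follows, for instance, from the uniform modulus-of-continuity estimates for David maps with a common exponential moment. Second, the higher integrability \(J\log^{q'}(e+J)\in L^1\) is obtained for every \(q'<q\) from the weighted area inequality \(\int_E J_f\le C\,|f(E)|\), combined with the bound \(|f(E)|\lesssim \log^{-q}(1/|E|)\)-type, whose constants are controlled by \(M_0\). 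Since \(q>8\), the exponent \(8\) is admissible. A normal-families argument is an alternative: any sequence in \(\mathcal A\) has a subsequence whose principal maps converge locally uniformly, and the estimate is lower semicontinuous in the limit. I would avoid this route, because compactness of \(\mathcal A\) itself is not available. The quantitative tracking is therefore the main obstacle. One must verify that the proofs in \cite{AIM}, which are stated for a single coefficient, produce constants depending only on \(M_0\), and in particular that the uniform bound \(|f^\mu(B_R)|\le C\) holds. I would first try to derive that bound from the \(L^2\) resolvent itself. Writing \(f^\mu=z+\Cauchy\omega\), where \(\omega-\mu\Beurling\omega=\mu\), \cref{prop:uniform-principal-resolvent} gives \(\|\omega\|_2\le4\|\kappa_\mu\mu\|_2\le4\|\kappa_\mu\|_{L^2(\mathcal U)}\). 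This is uniform by \eqref{eq:kappa-uniform-exp} applied with \(\sigma\) small, since \(\kappa^2\lesssim e^{\sigma\kappa}\). Then \(\|Df^\mu\|_{L^2(B_R)}\) is uniformly bounded, so \(\int_{B_R}J_{f^\mu}\le C\), and by the Sobolev embedding and the normalization the images \(f^\mu(B_R)\) lie in a fixed disk. This \(L^2\) bound is the base from which the logarithmic improvement is bootstrapped.

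Assertion (3) is immediate. We have \(q>p_{\mathrm{res}}\), so \cref{lem:uniform-AIM-majorants} applies to \(\mathcal A\) with the fixed \(\alpha\) and gives \eqref{eq:kappa-uniform-exp}. \cref{prop:uniform-principal-resolvent} gives \eqref{eq:principal-resolvent-estimate} with the universal constant \(8\), independently of \(\mu\).
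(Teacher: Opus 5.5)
Your proposal follows the paper's proof. It uses the same citations to \cite{AIM} and \cite{AGRS} for assertion (1), the same constant-tracking in the area-distortion and distribution-function argument of \cite[Theorems~20.4.12--20.4.14]{AIM} for (2), and \cref{lem:uniform-AIM-majorants,prop:uniform-principal-resolvent} for (3), and your resolvent bound \(\|\omega\|_2\le4\|\kappa_\mu\|_{L^2(\mathcal U)}\) usefully makes explicit the uniform \(L^1\) Jacobian bound that the paper leaves implicit. One small correction: \(W^{1,2}\) does not embed into \(L^\infty\) in the plane, so the uniform containment \(f^\mu(B_R)\subset B_{R'}\) should instead come from the estimate \(|\Cauchy\omega(z)|\le\|\omega\|_{L^1(\mathcal U)}/(\pi(R'-R_0))\) on a circle \(|z|=R'\) with \(R'>\max\{R,R_0\}\), together with the fact that \(f^\mu\) is a homeomorphism.
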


\begin{proof}
For a solution of \(\dbar f=\mu\dz f\), one has \(K_f=K_\mu\) almost
everywhere.  The family assumption is therefore the exponential-distortion
hypothesis of \cite[Section~20.4]{AIM}.
The principal existence theorem is \cite[Theorem~20.4.7]{AIM}; the
\(W^{1,2}_{\loc}\)-regularity follows from \cite[Corollary~1.2]{AGRS},
and properties \(N\) and \(N^{-1}\) are
\cite[Corollary~20.4.8]{AIM}.

Since \(q>8\), \cite[Theorem~20.4.12]{AIM} gives
\(J_{f^\mu}\log^8(e+J_{f^\mu})\in L^1_{\loc}\).  The uniform bound in
\eqref{eq:uniform-J-log8} follows from the quantitative area-distortion
estimate \cite[Theorem~20.4.13]{AIM} and the distribution-function
argument in \cite[Theorem~20.4.14]{AIM}: their constants depend only on
the chosen exponents, the fixed support, and the moment bound \(M_0\).
The last assertion is
\cref{lem:uniform-AIM-majorants,prop:uniform-principal-resolvent}.
\end{proof}

We shall also use the following elementary consequence of the principal
normalization.

\begin{lemma}[Principal Poincar\'e inequality]
\label{lem:principal-poincare}
Let \(K\Subset B_R\).  There exists \(C=C(K,R)\) such that
\begin{equation}\label{eq:principal-poincare}
    \|v\|_{L^2(K)}\le C\|Dv\|_{L^2(B_{2R})}
\end{equation}
whenever \(v\in W^{1,2}_{\loc}(\C)\) is holomorphic on
\(\C\setminus\overline{B_R}\) and satisfies \(v(z)=O(1/z)\) at infinity.
\end{lemma}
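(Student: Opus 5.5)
The plan is to argue by contradiction, combining a Rellich compactness argument with the rigidity of holomorphic functions vanishing at infinity. Suppose no such constant exists. Then there are \(v_j\in W^{1,2}_{\loc}(\C)\), holomorphic on \(\C\setminus\overline{B_R}\) with \(v_j(z)=O(1/z)\), such that
\[
\|v_j\|_{L^2(K)}=1\quad\text{and}\quad\|Dv_j\|_{L^2(B_{2R})}\to0 .
\]
The difficulty is that \(\|v_j\|_{L^2(K)}\) alone does not control \(v_j\) on all of \(B_{2R}\). So we first establish an auxiliary bound: a direct estimate of the mean of \(v\) over the annulus \(A=B_{2R}\setminus\overline{B_R}\) in terms of \(\|Dv\|_{L^2(A)}\). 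Once that is available, no contradiction argument is needed.

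\textbf{Mean over the annulus.} On \(|z|>R\), \(v\) has a Laurent expansion \(v=\sum_{n\ge1}a_nz^{-n}\), with no constant term because \(v=O(1/z)\). For each \(\rho\in(R,2R)\) the circle mean \(\frac1{2\pi}\int_0^{2\pi}v(\rho e^{it})\,dt\) equals the constant Laurent coefficient, which is \(0\). Integrating in \(\rho\) with weight \(\rho\,d\rho\) shows that the area mean \(\bar v_A\) of \(v\) over \(A\) vanishes.

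\textbf{Poincar\'e on the ball.} Apply the standard Poincar\'e inequality on the ball \(B_{2R}\) with the mean over the subset \(A\) (a set of positive measure):
\[
\|v-\bar v_A\|_{L^2(B_{2R})}\le C(R)\,\|Dv\|_{L^2(B_{2R})}.
\]
This version follows from the usual one, since \(|\bar v_{B_{2R}}-\bar v_A|\le |A|^{-1/2}\|v-\bar v_{B_{2R}}\|_{L^2(B_{2R})}\). Because \(\bar v_A=0\), we get \(\|v\|_{L^2(B_{2R})}\le C\|Dv\|_{L^2(B_{2R})}\). Restricting to \(K\subset B_R\subset B_{2R}\) gives \eqref{eq:principal-poincare} with \(C=C(R)\), which is in particular \(C(K,R)\).

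\textbf{Main obstacle.} The only delicate point is justifying that the circle means vanish for a function that is merely \(W^{1,2}_{\loc}\) globally. This causes no trouble, since on the open set \(|z|>R\) the function \(v\) is genuinely holomorphic, hence smooth, and the Laurent expansion converges uniformly on each circle \(|z|=\rho>R\). The \(O(1/z)\) normalization excludes the constant term, and holomorphy at infinity excludes positive powers. The point that needs checking is that the a.e.-defined Sobolev representative agrees with the holomorphic function on \(A\), so that the area mean computed from the Laurent series is the mean of the Sobolev function; this holds because they coincide a.e. there. If one prefers the compactness route instead, the same vanishing-mean fact supplies the missing normalization that rules out the constant limit produced by Rellich's theorem.
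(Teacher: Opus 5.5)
Your proof is correct and follows essentially the paper's argument: both use the Laurent expansion on the annulus \(A=B_{2R}\setminus\overline{B_R}\) to get \(\int_A v\,dA=0\), and then combine the bound \(|c|\le|A|^{-1/2}\|v-c\|_{L^2(A)}\) for the average \(c\) over \(B_{2R}\) with the standard Poincar\'e inequality on \(B_{2R}\). The only difference is that the paper places these estimates inside a contradiction argument, whereas you correctly note that they give the inequality directly, with a constant depending only on \(R\).
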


\begin{proof}
If the estimate failed, after normalization there would be \(v_j\) such
that
\[
    \|v_j\|_{L^2(K)}=1,
    \qquad
    \|Dv_j\|_{L^2(B_{2R})}\longrightarrow0.
\]
Let \(c_j\) be the average of \(v_j\) over \(B_{2R}\).  The usual
Poincar\'e inequality gives
\[
    \|v_j-c_j\|_{L^2(B_{2R})}\longrightarrow0.
\]
On the annulus \(A=B_{2R}\setminus\overline{B_R}\), principal
normalization gives the Laurent expansion
\(v_j(z)=\sum_{n\ge1}a_{n,j}z^{-n}\).  Every term has zero angular mean,
so \(\int_Av_j\dA=0\).  Hence
\[
    |c_j|
    =\left|\frac1{|A|}\int_A(c_j-v_j)\dA\right|
    \le |A|^{-1/2}\|v_j-c_j\|_{L^2(A)}\longrightarrow0.
\]
Thus \(v_j\to0\) in \(L^2(B_{2R})\), contradicting
\(\|v_j\|_{L^2(K)}=1\).
\end{proof}

\section{Coordinate calculus and exponential product estimates}

\subsection{The differential of the coordinate map}

\begin{lemma}[Real \(C^{1,1}\)-regularity and cancellation]
\label{lem:F-calculus}
The map \(F:\C\to\C\), \(F(\xi)=\xi/(2+|\xi|)\), is real \(C^{1,1}\).
Its real differential is
\begin{equation}\label{eq:DF-formula}
    DF_\xi(h)=
    \begin{cases}
    \displaystyle
    \frac{h}{2+|\xi|}
    -\frac{\xi\operatorname{Re}(\overline\xi h)}
    {|\xi|(2+|\xi|)^2},&\xi\ne0,\\[1.1em]
    \displaystyle\frac12h,&\xi=0.
    \end{cases}
\end{equation}
There is a universal constant \(C\) such that
\begin{align}
    |DF_\xi(h)|&\le C|h|,
    \label{eq:DF-bounded}\\
    |DF_{\xi_1}(h)-DF_{\xi_0}(h)|
    &\le C|\xi_1-\xi_0|\,|h|,
    \label{eq:DF-Lipschitz}\\
    |F(\xi+h)-F(\xi)-DF_\xi(h)|
    &\le C|h|^2.
    \label{eq:F-quadratic}
\end{align}
Moreover,
\begin{equation}\label{eq:F-cancellation}
    \frac{\|DF_\xi\|_{\op}}{1-|F(\xi)|^2}
    =\frac{2+|\xi|}{4(1+|\xi|)}
    \le\frac12.
\end{equation}
\end{lemma}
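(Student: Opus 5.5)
The plan is to work in polar form. Write \(F(\xi)=g(|\xi|)\,\xi\) with \(g(t)=1/(2+t)\). Away from the origin \(|\xi|\) is smooth, so \(F\) is smooth on \(\C\setminus\{0\}\), and the chain rule gives
\[
    DF_\xi(h)=g(|\xi|)h+g'(|\xi|)\,\xi\,\frac{\operatorname{Re}(\overline\xi h)}{|\xi|}.
\]
Substituting \(g'(t)=-(2+t)^{-2}\) yields \eqref{eq:DF-formula} for \(\xi\neq0\). At \(\xi=0\) we have \(F(h)-h/2=-h|h|/(2(2+|h|))=O(|h|^2)\), which gives \(DF_0=\tfrac12 I\) together with the quadratic bound at the origin.

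Next I compute the operator norm. Decompose \(h\) into a radial part (parallel to \(\xi\)) and a tangential part. On the tangential part \(DF_\xi\) acts as multiplication by \(1/(2+t)\), where \(t=|\xi|\). On the radial part it acts as multiplication by \(1/(2+t)-t/(2+t)^2=2/(2+t)^2\). Since \(2/(2+t)\le1\), this gives \(\|DF_\xi\|_{\op}=1/(2+t)\le\frac12\), which proves \eqref{eq:DF-bounded}. It also matches \(\tfrac12\) at \(\xi=0\). For the cancellation, \(1-|F|^2=((2+t)^2-t^2)/(2+t)^2=4(1+t)/(2+t)^2\). Therefore
\[
    \frac{\|DF_\xi\|_{\op}}{1-|F(\xi)|^2}=\frac{2+t}{4(1+t)},
\]
which is decreasing in \(t\) and equals \(\frac12\) at \(t=0\). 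This is \eqref{eq:F-cancellation}.

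The main step is the Lipschitz estimate \eqref{eq:DF-Lipschitz}. The map \(\xi\mapsto DF_\xi\) is continuous at \(0\), since the radial and tangential multipliers both tend to \(\frac12\); hence \(F\) is \(C^1\). To get a global Lipschitz bound I will bound the second derivative on \(\C\setminus\{0\}\). Differentiating the formula, the terms involving \(g'\) carry the factor \(\xi/|\xi|\), and its derivative is \(O(1/|\xi|)\). However, these terms come multiplied by \(|g'(t)|\,t\), or by \(g(t)-g(0)+tg'(t)=O(t)\), so the \(1/|\xi|\) singularity cancels. A cleaner way to see this is to write \(DF_\xi=g(t)I+g'(t)\,t\,P_\xi\), where \(P_\xi\) is the orthogonal projection onto \(\R\xi\). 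Then \(\|DP_\xi\|\lesssim1/t\), and the coefficient \(tg'(t)\) is bounded and vanishes at \(0\) like \(t\). The remaining terms have derivatives \(g'(t)\), \((tg'(t))'\), and \(tg'(t)\cdot(1/t)\), all bounded uniformly in \(t\ge0\), decaying like \((2+t)^{-2}\). So the second derivative is bounded on \(\C\setminus\{0\}\).

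To conclude \eqref{eq:DF-Lipschitz} for segments \([\xi_0,\xi_1]\) that pass through \(0\), I will use that the line segment meets \(0\) in at most one point. Integrating along the segment while excluding that point is legitimate because \(DF\) is continuous. Finally, \eqref{eq:F-quadratic} follows from
\[
    F(\xi+h)-F(\xi)-DF_\xi(h)=\int_0^1\bigl(DF_{\xi+sh}-DF_\xi\bigr)(h)\,ds
\]
together with \eqref{eq:DF-Lipschitz}, giving constant \(C/2\). The only delicate point is the uniform boundedness of the derivative of \(P_\xi\)-terms near \(0\), and the factor \(tg'(t)\) handles it.
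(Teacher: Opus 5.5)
Your proposal is correct and follows essentially the same route as the paper. Both arguments write \(F(\xi)=a(|\xi|)\xi\) and read off the radial and tangential eigenvalues \(2/(2+t)^2\) and \(1/(2+t)\) to get \(\|DF_\xi\|_{\op}=1/(2+t)\) and the cancellation. Your decomposition \(DF_\xi=g(t)I+tg'(t)P_\xi\) is the paper's \(a(r)I+a'(r)T(\xi)\), since \(T(\xi)=|\xi|P_\xi\); it gives a bounded second derivative off the origin, which together with continuity of \(DF\) at \(0\) yields the Lipschitz bound, and integrating along segments gives the quadratic remainder. You are slightly more explicit than the paper about segments that pass through the origin.
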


\begin{proof}
Write \(r=|\xi|\) and \(F(\xi)=a(r)\xi\), where
\(a(r)=(2+r)^{-1}\).  For \(\xi\ne0\),
\[
    Dr_\xi(h)=\frac{\operatorname{Re}(\overline\xi h)}{r},
    \qquad
    a'(r)=-\frac1{(2+r)^2},
\]
and the real product rule gives \eqref{eq:DF-formula}.  Directly from the
definition, \(DF_0=(1/2)I\).

The tangential and radial eigenvalues are
\[
    \lambda_{\mathrm{tan}}(r)=\frac1{2+r},
    \qquad
    \lambda_{\mathrm{rad}}(r)=\frac2{(2+r)^2}.
\]
Consequently,
\begin{equation}\label{eq:DF-exact-opnorm}
    \|DF_\xi\|_{\op}=\frac1{2+|\xi|}\le\frac12.
\end{equation}
For the Lipschitz estimate, set
\(T(\xi)=\xi\otimes\xi/|\xi|\) for \(\xi\ne0\), and \(T(0)=0\).  This
map is Lipschitz: it is homogeneous of degree one, smooth on the unit
circle, and satisfies \(|T(\xi)|=|\xi|\).  Since
\[
    DF_\xi=a(r)I+a'(r)T(\xi),
    \qquad |a'(r)|\le\frac14,
    \qquad r|a''(r)|\le C,
\]
differentiation away from the origin gives a uniform bound for the second
derivative of \(F\).  Together with the continuity of \(DF\) at the
origin, this proves \eqref{eq:DF-Lipschitz}.  Integrating the differential
along the segment \(\xi+t h\) gives
\[
    F(\xi+h)-F(\xi)-DF_\xi(h)
    =\int_0^1(DF_{\xi+th}-DF_\xi)h\,dt,
\]
and hence \eqref{eq:F-quadratic}.

Finally,
\[
    1-|F(\xi)|^2=\frac{4(1+r)}{(2+r)^2}.
\]
Together with \eqref{eq:DF-exact-opnorm}, this gives
\eqref{eq:F-cancellation}.
\end{proof}

\subsection{\texorpdfstring{Products of exponential factors against an \(L\log^8L\) density}{Products of exponential factors against an L log8 L density}}

The next elementary lemma is the bookkeeping device behind all source
estimates.

\begin{lemma}[Exponential factors against \(L\log^8L\)]
\label{lem:exponential-product}
Let \(E\subset\C\) have finite measure, and let \(J\ge0\) satisfy
\begin{equation}\label{eq:J-log8-hypothesis}
    \int_EJ\log^8(e+J)\dA\le M_J.
\end{equation}
Let \(b_1,\dots,b_N\ge0\) satisfy
\begin{equation}\label{eq:fixed-exp-factors}
    \int_Ee^{\sigma_jb_j}\dA\le M_j
    \qquad(j=1,\dots,N)
\end{equation}
for some \(\sigma_j>0\).  Let \(a_j,s_k\ge0\) and suppose
\begin{equation}\label{eq:total-degree}
    A:=\sum_{j=1}^Na_j+\sum_{k=1}^ms_k\le8.
\end{equation}
Then
\begin{equation}\label{eq:product-estimate}
    \int_EJ
      \prod_{j=1}^Nb_j^{a_j}
      \prod_{k=1}^m|g_k|^{s_k}\dA
    \le C\prod_{k=1}^m\|g_k\|_{L^\Phi(E)}^{s_k}
\end{equation}
for all \(g_k\in L^\Phi(E)\).  The constant depends only on the displayed
data and the exponents.
\end{lemma}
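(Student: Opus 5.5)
The plan is to combine homogeneity in the \(g_k\) with a pointwise Young-type splitting that trades a power of the product against the \(\log^8\) weight on \(J\). The right-hand side of \eqref{eq:product-estimate} is homogeneous of degree \(s_k\) in \(g_k\), and so is the left-hand side. If some \(g_k=0\) a.e., both sides vanish. Otherwise we replace \(g_k\) by \(g_k/\|g_k\|_{L^\Phi(E)}\), so it suffices to prove
\[
\int_E J\,Y\dA\le C
\qquad\text{whenever }\|g_k\|_{L^\Phi(E)}\le1,
\qquad\text{where }
Y:=\prod_j b_j^{a_j}\prod_k|g_k|^{s_k}.
\]
If \(A=0\) then \(Y=1\), and the claim follows from \eqref{eq:J-log8-hypothesis} because \(J\le J\log^8(e+J)\). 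So assume \(0<A\le8\).

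The key pointwise step is a splitting with a small parameter \(\varepsilon\in(0,1]\), to be fixed later in terms of the \(\sigma_j\), \(N\) and \(m\). Write \(JY=\varepsilon^{-8}J\,(\varepsilon^8Y)\) and split \(E\) into two sets.
\begin{itemize}
\item On \(\{\varepsilon^8Y\le\log^8(e+J)\}\) we have \(JY\le\varepsilon^{-8}J\log^8(e+J)\), whose integral is at most \(\varepsilon^{-8}M_J\).
\item On the complementary set, \(\log(e+J)<\varepsilon Y^{1/8}\), so \(J\le e^{\varepsilon Y^{1/8}}\) and hence \(JY\le Y e^{\varepsilon Y^{1/8}}\).
\end{itemize}
Since \(A\le8\), we have \(Y^{1/8}\le1+Y^{1/A}\). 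Weighted AM--GM then gives
\[
Y^{1/A}\le\sum_j\frac{a_j}{A}b_j+\sum_k\frac{s_k}{A}|g_k|
\le\sum_jb_j+\sum_k|g_k|.
\]
Also \(Y\le C_\varepsilon e^{\varepsilon Y^{1/A}}\), because polynomial growth is dominated by exponential growth. Combining these bounds on the second set yields
\[
JY\le C_\varepsilon\prod_j e^{2\varepsilon b_j}\prod_k e^{2\varepsilon|g_k|}.
\]

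We then integrate with Hölder's inequality using \(N+m\) equal exponents \(N+m\). This bounds the integral by
\[
\prod_j\Bigl(\int_E e^{2(N+m)\varepsilon b_j}\Bigr)^{1/(N+m)}
\prod_k\Bigl(\int_E e^{2(N+m)\varepsilon|g_k|}\Bigr)^{1/(N+m)}.
\]
Now choose \(\varepsilon\) so small that \(2(N+m)\varepsilon\le\min_j\sigma_j\) and \(\theta:=2(N+m)\varepsilon<1\). The first factors are then bounded via \eqref{eq:fixed-exp-factors}, using \(e^{\theta b}\le 1+e^{\sigma_jb}\) and \(|E|<\infty\). The second factors are bounded by \(C_\theta(|E|+1)\) by \eqref{eq:unit-ball-exp} in \cref{lem:luxemburg-exponential}, since \(\|g_k\|_{L^\Phi(E)}\le1\). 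All constants depend only on \(M_J\), the \(M_j\), the \(\sigma_j\), \(|E|\) and the exponents.

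The main obstacle is the second set of the splitting. There the factor \(J\) must be absorbed into an exponential of \(Y^{1/8}\), and this exponential must have a small enough coefficient to be integrable simultaneously against the fixed moments of the \(b_j\) and the Luxemburg unit ball, which only allows exponents below \(1\). The parameter \(\varepsilon\) and the constraint \(A\le8\), which is what makes \(Y^{1/8}\lesssim 1+Y^{1/A}\) linear in the factors, are exactly what make this work.
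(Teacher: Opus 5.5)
Your proof is correct and is essentially the paper's argument: a two-case Young-type split of $J\cdot(\text{product})$ between $J\log^{8}(e+J)$ and an exponential of the weighted AM--GM mean, followed by the Luxemburg unit-ball bound from \cref{lem:luxemburg-exponential}. The differences are cosmetic. You split against $\log^8$ directly, hence the device $Y^{1/8}\le 1+Y^{1/A}$, and finish with H\"older over $N+m$ factors; the paper splits against $\log^A$ and uses convexity of the exponential to get a sum rather than a product, so it needs no H\"older. One small fix: a vanishing $g_k$ makes both sides vanish only when $s_k>0$, and factors with $s_k=0$ should simply be dropped.
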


\begin{proof}
Factors whose exponent is zero may be omitted.  If \(A=0\), then the
left-hand side is \(\int_EJ\dA\), which is bounded by \(M_J\) because
\(\log(e+J)\ge1\).  We therefore assume \(A>0\).

We first prove a pointwise inequality.  Let
\(x_1,\dots,x_\ell,t\ge0\), let \(c_i>0\), and put
\(A=\sum_i c_i\).  Define \(w_i=c_i/A\), so that \(\sum_iw_i=1\).
Weighted AM--GM gives
\begin{equation}\label{eq:weighted-amgm-product}
    \prod_{i=1}^\ell x_i^{c_i}
    =\left(\prod_{i=1}^\ell x_i^{w_i}\right)^A
    \le\left(\sum_{i=1}^\ell w_ix_i\right)^A.
\end{equation}
Write \(s=\sum_iw_ix_i\).  For every \(\varepsilon>0\), there is
\(C=C(A,\varepsilon)\) such that
\begin{equation}\label{eq:elementary-LlogL-young}
    ts^A\le C\bigl(t\log^A(e+t)+e^{\varepsilon s}\bigr).
\end{equation}
Indeed, if
\(s\le 2\varepsilon^{-1}\log(e+t)\), the first term on the right controls
\(ts^A\).  In the complementary case,
\(e+t<e^{\varepsilon s/2}\), and therefore
\[
    ts^A\le e^{\varepsilon s/2}s^A
    \le C(A,\varepsilon)e^{\varepsilon s}.
\]
Finally, another application of weighted AM--GM gives
\begin{equation}\label{eq:exp-weighted-amgm}
    e^{\varepsilon s}
    =\prod_{i=1}^\ell(e^{\varepsilon x_i})^{w_i}
    \le\sum_{i=1}^\ell w_ie^{\varepsilon x_i}.
\end{equation}
Combining \eqref{eq:weighted-amgm-product}--
\eqref{eq:exp-weighted-amgm}, we obtain
\begin{equation}\label{eq:generalized-young}
    t\prod_{i=1}^\ell x_i^{c_i}
    \le C\left(t\log^A(e+t)
       +\sum_{i=1}^\ell e^{\varepsilon x_i}\right).
\end{equation}

We apply this inequality to the factors in the statement.  Assume first
that every \(g_k\ne0\).  For a fixed \(\delta>0\), put
\[
    \rho_k=(1+\delta)\|g_k\|_{L^\Phi(E)},
    \qquad
    u_k=g_k/\rho_k.
\]
By the definition of the Luxemburg norm,
\(\int_E\Phi(|u_k|)\dA\le1\).  Choose \(\varepsilon>0\) so small that
\[
    \varepsilon<1,
    \qquad
    \varepsilon\le\sigma_j\quad(1\le j\le N).
\]
Use \eqref{eq:generalized-young} with \(t=J\), with the variables
\(b_j\) repeated with exponents \(a_j\), and with the variables
\(|u_k|\) repeated with exponents \(s_k\).  Since \(A\le8\),
\[
    J\log^A(e+J)
    \le J+J\log^8(e+J)
    \le 2J\log^8(e+J).
\]
The fixed exponential terms are integrable by
\eqref{eq:fixed-exp-factors}; for the normalized variables,
\cref{lem:luxemburg-exponential} gives
\[
    \int_Ee^{\varepsilon|u_k|}\dA
    \le C_\varepsilon(|E|+1).
\]
After integration of \eqref{eq:generalized-young}, we therefore obtain
\[
    \int_EJ
      \prod_{j=1}^Nb_j^{a_j}
      \prod_{k=1}^m|u_k|^{s_k}\dA
    \le C.
\]
Restoring \(g_k=\rho_ku_k\) gives
\[
    \int_EJ
      \prod_{j=1}^Nb_j^{a_j}
      \prod_{k=1}^m|g_k|^{s_k}\dA
    \le C(1+\delta)^{\sum_ks_k}
       \prod_{k=1}^m\|g_k\|_{L^\Phi(E)}^{s_k}.
\]
Letting \(\delta\downarrow0\) proves \eqref{eq:product-estimate}.  If some
\(g_k=0\) and \(s_k>0\), both sides vanish; factors with \(s_k=0\) are
irrelevant.  This completes the proof.
\end{proof}

\section{Pullback of the linearized equation}

Fix \(\nu\in\mathscr D_\Lambda\), and write
\begin{equation}\label{eq:base-notation}
    \mu=F(\nu),
    \qquad
    f=f^\mu,
    \qquad
    K=K_\mu=1+|\nu|,
    \qquad
    J=J_f.
\end{equation}
For \(\eta\in L^\Phi_{\mathcal U}(\C)\), put
\begin{equation}\label{eq:A-nu-eta}
    A_\nu[\eta]=DF_\nu(\eta).
\end{equation}
The expected first variation is the principal solution of
\begin{equation}\label{eq:linearized-equation}
    \dbar V-\mu\dz V=A_\nu[\eta]\,f_z.
\end{equation}

\subsection{The transformed source}

Since \(f\) is a homeomorphism with properties \(N\) and \(N^{-1}\),
the area formula and change of variables are available in both directions.
For almost every \(z\) with \(f_z(z)\ne0\), define
\begin{equation}\label{eq:rho-definition}
    \rho_{\nu,\eta}(f(z))
    =\frac{A_\nu[\eta](z)}{1-|\mu(z)|^2}
      \frac{f_z(z)}{\overline{f_z(z)}}.
\end{equation}
On the set \(\{f_z=0\}\), set the phase factor
\(f_z/\overline{f_z}\) equal to zero.  Since
\(J_f=(1-|\mu|^2)|f_z|^2\), the Jacobian vanishes on this set, and its
image has measure zero by the area formula.  Thus the convention does not
change the target-space equivalence class.  Because
\(A_\nu[\eta]=0\) almost everywhere off \(\mathcal U\), the function
\(\rho_{\nu,\eta}\) is supported in the bounded set \(f(\mathcal U)\).

\begin{lemma}[Cancellation in the transformed source]
\label{lem:rho-L7}
The function \(\rho_{\nu,\eta}\) belongs to \(L^7(\C)\), and
\begin{equation}\label{eq:rho-L7-estimate}
    \|\rho_{\nu,\eta}\|_{L^7(\C)}
    \le C_\nu\|\eta\|_\Phi.
\end{equation}
On every sufficiently small Luxemburg ball in \(\mathscr D_\Lambda\), the
constant is uniform in the base point.
\end{lemma}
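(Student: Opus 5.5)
The plan is to compute \(\|\rho_{\nu,\eta}\|_7^7\) by changing variables from the target back to the source. Since \(f\) is a homeomorphism with properties \(N\) and \(N^{-1}\) (\cref{prop:analytic-package}(1)), the area formula gives
\[
    \int_{\C}|\rho_{\nu,\eta}(w)|^7\,dA(w)
    =\int_{\mathcal U}|\rho_{\nu,\eta}(f(z))|^7J_f(z)\dA(z)
    =\int_{\mathcal U}\left(\frac{|A_\nu[\eta]|}{1-|\mu|^2}\right)^7J\dA .
\]
The integration reduces to \(\mathcal U\) because \(A_\nu[\eta]=0\) a.e. off \(\mathcal U\). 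The convention on \(\{f_z=0\}\) is harmless there, since \(J=0\) on that set.

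The key pointwise step is the cancellation \eqref{eq:F-cancellation} of \cref{lem:F-calculus}:
\[
    \frac{|DF_\nu(\eta)|}{1-|F(\nu)|^2}\le\frac12|\eta|.
\]
This gives
\[
    \|\rho_{\nu,\eta}\|_7^7\le 2^{-7}\int_{\mathcal U}J\,|\eta|^7\dA .
\]
Without the cancellation one would get an extra factor \(K^7\). That factor could still be handled by \cref{lem:exponential-product} with \(b=K\), but the product would have total degree \(14>8\). So the cancellation is exactly what keeps the degree at \(7\le 8\).

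Now apply \cref{lem:exponential-product} with \(E=\mathcal U\), the density \(J=J_f\), no fixed factors (\(N=0\)), and one factor \(g_1=\eta\) with exponent \(s_1=7\). The total degree is \(A=7\le8\). The required bound \(\int_{\mathcal U}J\log^8(e+J)\dA\le M_J\) comes from \cref{prop:analytic-package}(2). This applies because \(p(\nu)>\Lambda>8\), so there is \(q\in(\max\{p_{\mathrm{res}},8\},p(\nu))\) with \(\int_{\mathcal U}e^{qK_\mu}=e^q\int_{\mathcal U}e^{q|\nu|}<\infty\). We obtain
\[
    \int_{\mathcal U}J|\eta|^7\le C\|\eta\|_\Phi^7,
\]
and taking seventh roots gives \eqref{eq:rho-L7-estimate}. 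Membership of \(\rho_{\nu,\eta}\) in \(L^7\) is part of the same computation, and support in \(f(\mathcal U)\) was noted before the statement.

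For uniformity, fix \(\nu_*\in\mathscr D_\Lambda\) and choose \(\Lambda<q<q_1<p(\nu_*)\). \Cref{lem:small-ball-moment} gives \(r>0\) such that \(\int_{\mathcal U}e^{q|\nu|}\le C\) for \(\|\nu-\nu_*\|_\Phi<r\), hence \(\int_{\mathcal U}e^{qK_{F(\nu)}}\le M_0\) uniformly. \Cref{prop:analytic-package}(2) then bounds \(M_J\) uniformly on this ball. Since the constant in \cref{lem:exponential-product} depends only on \(M_J\), \(|\mathcal U|\) and the exponents, \(C_\nu\) is uniform on the ball.

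The only substantive point is the uniformity of the \(L\log^8L\) bound, and that is already packaged in \cref{prop:analytic-package}. The rest is the change of variables and a single application of the cancellation.
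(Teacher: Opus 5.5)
Your proposal is correct and follows essentially the same route as the paper. It uses the area formula, the pointwise bound \(|\rho_{\nu,\eta}(f(z))|\le\frac12|\eta(z)|\) from the cancellation \eqref{eq:F-cancellation}, and a single application of the product lemma with \(s_1=7\). Uniformity then comes from the small-ball moment lemma together with the uniform \(L\log^8L\) Jacobian bound.
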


\begin{proof}
The phase factor in \eqref{eq:rho-definition} has modulus at most one.
Using \eqref{eq:F-cancellation} pointwise with \(\xi=\nu(z)\) and
\(h=\eta(z)\), we obtain
\begin{equation}\label{eq:rho-pointwise}
    |\rho_{\nu,\eta}(f(z))|
    \le
    \frac{\|DF_{\nu(z)}\|_{\op}}{1-|F(\nu(z))|^2}|\eta(z)|
    \le\frac12|\eta(z)|.
\end{equation}
The area formula and the support property then give
\begin{align}
    \|\rho_{\nu,\eta}\|_7^7
    &=\int_{f(\mathcal U)}|\rho_{\nu,\eta}(\zeta)|^7\dA(\zeta)\notag\\
    &=\int_{\mathcal U}
       |\rho_{\nu,\eta}(f(z))|^7J_f(z)\dA(z)\notag\\
    &\le2^{-7}\int_{\mathcal U}|\eta|^7J_f\dA.
    \label{eq:rho-area-estimate}
\end{align}
Apply \cref{lem:exponential-product} with \(E=\mathcal U\), no fixed
factor \(b_j\), one Orlicz factor \(g_1=\eta\), and exponent \(s_1=7\).
This yields
\[
    \int_{\mathcal U}|\eta|^7J_f\dA
    \le C\|\eta\|_\Phi^7.
\]
Taking seventh roots proves \eqref{eq:rho-L7-estimate}.  On a sufficiently
small Luxemburg ball, \cref{lem:small-ball-moment} gives a common
exponential moment for \(K_f=1+|\nu|\), and
\cref{prop:analytic-package} gives a common bound for
\(J_f\log^8(e+J_f)\).  The constant in the product lemma is therefore
uniform in the base point.
\end{proof}

Let
\begin{equation}\label{eq:W-and-V}
    W_{\nu,\eta}=\Cauchy\rho_{\nu,\eta},
    \qquad
    V_\nu[\eta]=W_{\nu,\eta}\circ f.
\end{equation}
Since \(\rho_{\nu,\eta}\in L^7\) has bounded support, its Cauchy
transform belongs to \(W^{1,7}_{\loc}(\C)\), is continuous, and satisfies
\(W_{\nu,\eta}(\zeta)=O(1/\zeta)\) at infinity.  Moreover,
\[
    \dbar W_{\nu,\eta}=\rho_{\nu,\eta},
    \qquad
    \dz W_{\nu,\eta}=\Beurling\rho_{\nu,\eta}.
\]
The boundedness of the Beurling transform on \(L^7\) therefore gives
\begin{equation}\label{eq:DW-L7}
    \|DW_{\nu,\eta}\|_{L^7(\C)}
    \le C\|\eta\|_\Phi.
\end{equation}

\subsection{The pullback identity and its weighted estimate}

\begin{proposition}[Pullback formula for the first variation]
\label{prop:pullback-variation}
The function \(V_\nu[\eta]\) defined in \eqref{eq:W-and-V} belongs to
\(W^{1,2}_{\loc}(\C)\), satisfies the principal normalization, and is the
unique principal solution of \eqref{eq:linearized-equation}.

Moreover, let \(\kappa\ge0\) be measurable on \(\mathcal U\) with
\(e^{\sigma\kappa}\in L^1(\mathcal U)\) for some \(\sigma>0\).  Then, for
every \(g\in L^\Phi_{\mathcal U}(\C)\),
\begin{equation}\label{eq:weighted-pullback}
    \|\kappa g\,DV_\nu[\eta]\|_{L^2(\mathcal U)}
    \le C
       \|g\|_\Phi\|\eta\|_\Phi.
\end{equation}
The constant is uniform when the base points, the exponential moment of
\(\kappa\), and the Jacobian bound \eqref{eq:uniform-J-log8} range in
uniformly bounded families.
\end{proposition}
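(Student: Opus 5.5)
The plan is to work directly with the chain rule for the composition \(V=W\circ f\), where \(W=W_{\nu,\eta}\in W^{1,7}_{\loc}\) is continuous and \(f\in W^{1,2}_{\loc}\) is a homeomorphism with properties \(N\) and \(N^{-1}\).

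\textbf{Step 1: chain rule and regularity of \(V\).} First I justify, at almost every \(z\),
\[
    V_z=(W_\zeta\circ f)f_z+(W_{\bar\zeta}\circ f)\overline{f_{\bar z}},
    \qquad
    V_{\bar z}=(W_\zeta\circ f)f_{\bar z}+(W_{\bar\zeta}\circ f)\overline{f_z}.
\]
To do this I approximate \(\rho_{\nu,\eta}\) in \(L^7\) by smooth compactly supported \(\rho_j\) and set \(W_j=\Cauchy\rho_j\). For smooth \(W_j\) the formula is classical. Since \(W^{1,7}\subset C^{0,5/7}\), \(W_j\to W\) locally uniformly, so \(W_j\circ f\to V\) locally uniformly. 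For the derivatives, note that \(|D(W_j\circ f)|\le|DW_j\circ f|\,|Df|\) and \(|Df|^2\le K J_f\). Hence
\[
    \int_{B}|DW_j\circ f|^2|Df|^2
    \le\int_B |DW_j\circ f|^2KJ_f .
\]
I then apply Hölder with exponents \(7/2\) and \(7/5\), both against the measure \(J_f\,dA\):
\[
    \int_B |DW_j\circ f|^2KJ_f
    \le \Bigl(\int |DW_j\circ f|^7J_f\Bigr)^{2/7}
       \Bigl(\int K^{7/5}J_f\Bigr)^{5/7}.
\]
By the area formula, the first factor equals \(\|DW_j\|_{L^7(f(B))}^2\). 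The second factor is finite by \cref{lem:exponential-product}, taking the fixed factor \(b=K\) with exponent \(7/5\le 8\). This is exactly the conjugate pair \(7/5,7/2\) announced in the introduction. Applying the same estimate to the differences \(W_j-W_k\) shows that \(W_j\circ f\) is Cauchy in \(W^{1,2}_{\loc}\), so \(V\in W^{1,2}_{\loc}\) and the chain rule survives in the limit.

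\textbf{Step 2: the equation and uniqueness.} Using \(f_{\bar z}=\mu f_z\), I get the pullback identity \eqref{eq:intro-pullback-identity}:
\[
    V_{\bar z}-\mu V_z=(1-|\mu|^2)\overline{f_z}\,(W_{\bar\zeta}\circ f).
\]
Substituting \(W_{\bar\zeta}\circ f=\rho_{\nu,\eta}\circ f\) from \eqref{eq:rho-definition}, the right side becomes \(A_\nu[\eta]f_z\) on \(\{f_z\ne0\}\). On \(\{f_z=0\}\) both sides vanish, since there \(Df=0\). Normalization is immediate: \(W(\zeta)=O(1/\zeta)\), and \(f(z)=z+O(1/z)\) is conformal off \(\overline{\mathcal U}\), so \(V(z)=O(1/z)\). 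For uniqueness, the difference of two principal solutions solves \(\dbar w-\mu\dz w=0\) with \(w=O(1/z)\). The uniqueness part of \cref{prop:uniform-principal-resolvent} with \(H=0\) then gives \(w=0\). This needs \(\dbar w\in L^2\), which holds because \(w\) is holomorphic outside a disk and lies in \(W^{1,2}_{\loc}\).

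\textbf{Step 3: the weighted estimate, which is the main obstacle.} Pointwise, Step 1 gives \(|DV|\le 2(|DW|\circ f)|Df|\). Therefore
\[
    \int_{\mathcal U}\kappa^2|g|^2|DV|^2
    \le C\int_{\mathcal U}\kappa^2|g|^2K\,(|DW|^2\circ f)J_f .
\]
I apply Hölder with exponents \(7/2\) and \(7/5\) against \(J_f\,dA\), exactly as in Step 1. The first factor becomes \(\|DW\|_7^2\le C\|\eta\|_\Phi^2\) by \eqref{eq:DW-L7}. The second factor is
\[
    \Bigl(\int_{\mathcal U}\kappa^{14/5}|g|^{14/5}K^{7/5}J_f\Bigr)^{5/7}.
\]
Here \cref{lem:exponential-product} applies with fixed factors \(b_1=\kappa\) and \(b_2=K\), whose exponential moments are available (for \(K\), on small balls by \cref{lem:small-ball-moment}), and with the Orlicz factor \(g\). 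The total degree is \(14/5+14/5+7/5=7\le8\), so the lemma gives the bound \(C\|g\|_\Phi^{14/5}\). Raising to the power \(5/7\) gives \(\|g\|_\Phi^2\). Taking square roots then yields \eqref{eq:weighted-pullback}.

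The delicate points are justifying the chain rule and the change of variables for a merely Sobolev \(W\). These are handled by the approximation in Step 1 together with property \(N^{-1}\), which makes \(DW\circ f\) well defined almost everywhere. Uniformity of the constants follows because every constant above depends only on the moment of \(K\), the moment of \(\kappa\), and the bound \eqref{eq:uniform-J-log8}.
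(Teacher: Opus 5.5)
Your proposal is correct. For the Sobolev composition and for the weighted estimate it is the paper's argument. Doing H\"older with exponents $7/2$ and $7/5$ against $J_f\,dA$ on the source side is just the paper's target-side bound for $Q=(\kappa^2|g|^2K)\circ f^{-1}$ in $L^{7/5}$, transported by the area formula. It uses the same degree count $14/5+14/5+7/5=7$ in \cref{lem:exponential-product}.

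The one real difference is the uniqueness step.

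\emph{Your route.} You take the difference $w$ of two $W^{1,2}_{\loc}$ principal solutions and apply the uniqueness part of \cref{prop:uniform-principal-resolvent} with $H=0$. This needs the identification $w=\Cauchy(\dbar w)$, which follows by Liouville: $\dbar w\in L^2$ has compact support and $w=O(1/z)$. The argument is valid, and the homogeneous case is even elementary. Indeed, $\Beurling$ is an $L^2$-isometry and $|\mu|<1$ a.e., so $\|\mu\Beurling\omega\|_2<\|\omega\|_2$ whenever $\omega\ne0$.

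\emph{The paper's route.} The paper instead checks that the actual source lies in the resolvent domain, namely $\|\kappa_\mu A_\nu[\eta]f_z\|_2\le C\|\eta\|_\Phi$. This comes from the product lemma with degrees $2,2,1$. It then invokes uniqueness for that specific source.

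\emph{What each buys.} Your route is shorter for the statement as posed. However, it does not identify $V_\nu[\eta]$ with the resolvent solution carrying the estimate \eqref{eq:principal-resolvent-estimate}. The paper reuses exactly that Step~3 computation in \cref{thm:quadratic-remainder} to obtain $\|DV\|_{L^2(\C)}\le C\|h\|_\Phi$. So with your route that bound would have to be supplied separately at that point.
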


\begin{proof}
We divide the proof into three steps.

\smallskip
\noindent\emph{Step 1: the pullback identity and the Sobolev composition.}
For a smooth function \(W\), the chain rule in complex notation gives
\begin{align*}
    \dz(W\circ f)
    &=(W_\zeta\circ f)f_z
      +(W_{\bar\zeta}\circ f)\overline{f_{\bar z}},\\
    \dbar(W\circ f)
    &=(W_\zeta\circ f)f_{\bar z}
      +(W_{\bar\zeta}\circ f)\overline{f_z}.
\end{align*}
Since \(f_{\bar z}=\mu f_z\), we also have
\(\overline{f_{\bar z}}=\overline\mu\,\overline{f_z}\).  Subtracting
\(\mu\) times the first identity from the second, the terms containing
\(W_\zeta\) cancel and we obtain
\begin{equation}\label{eq:pullback-identity}
    (\dbar-\mu\dz)(W\circ f)
    =(1-|\mu|^2)\overline{f_z}\,
       (W_{\bar\zeta}\circ f).
\end{equation}

We now justify this formula for \(W=W_{\nu,\eta}\in W^{1,7}_{\loc}\).
Fix a disk \(B_R\), and choose a compact neighborhood of \(f(B_R)\).
Let \(W_j\) be smooth functions converging to \(W_{\nu,\eta}\) in
\(W^{1,7}\) on that neighborhood.  For
\(U_{jk}=W_j-W_k\), the ordinary chain rule and the operator norm identity
\begin{equation}\label{eq:finite-distortion-opnorm}
    |Df|_{\op}^2=KJ
\end{equation}
give
\begin{align}
    \int_{B_R}|D(U_{jk}\circ f)|^2\dA
    &\le \int_{B_R}|DU_{jk}(f(z))|^2K(z)J(z)\dA(z)\notag\\
    &=\int_{f(B_R)}(K\circ f^{-1})(\zeta)
       |DU_{jk}(\zeta)|^2\dA(\zeta).
    \label{eq:composition-Cauchy}
\end{align}
The change of variables used here is valid because \(f\) has property
\(N\).  A second use of the area formula gives
\begin{align}
    \|K\circ f^{-1}\|_{L^{7/5}(f(B_R))}^{7/5}
    &=\int_{B_R}K^{7/5}J\dA<\infty.
    \label{eq:pullback-weight-75}
\end{align}
The finiteness follows from \cref{lem:exponential-product}, applied with the
fixed exponential factor \(b_1=K\) of degree \(7/5\).  H\"older's
inequality with exponents \(7/5\) and \(7/2\) now yields
\[
    \int_{B_R}|D(U_{jk}\circ f)|^2\dA
    \le \|K\circ f^{-1}\|_{7/5}\|DU_{jk}\|_7^2,
\]
which tends to zero.  Moreover, the embedding
\(W^{1,7}\hookrightarrow C^{0,5/7}\) on bounded planar domains implies
that \(W_j\to W_{\nu,\eta}\) uniformly on \(f(B_R)\).  Hence
\(W_j\circ f\to W_{\nu,\eta}\circ f\) in \(L^2(B_R)\).  We have thus
proved that
\[
    V_\nu[\eta]=W_{\nu,\eta}\circ f\in W^{1,2}(B_R),
\]
and \eqref{eq:pullback-identity} passes to the limit in distributions.
Since \(B_R\) was arbitrary, \(V_\nu[\eta]\in W^{1,2}_{\loc}(\C)\).

Because \(W_{\bar\zeta}=\rho_{\nu,\eta}\), the definition
\eqref{eq:rho-definition} and \eqref{eq:pullback-identity} give
\[
    (\dbar-\mu\dz)V_\nu[\eta]
    =A_\nu[\eta]f_z
\]
almost everywhere; on \(\{f_z=0\}\), both sides vanish.  Finally,
\(W_{\nu,\eta}(\zeta)=O(1/\zeta)\) and
\(f(z)=z+O(1/z)\), so
\(V_\nu[\eta](z)=O(1/z)\).

\smallskip
\noindent\emph{Step 2: the weighted pullback estimate.}
The chain rule and \eqref{eq:finite-distortion-opnorm} imply
\[
    |DV_\nu[\eta](z)|^2
    \le C K(z)J(z)|DW_{\nu,\eta}(f(z))|^2.
\]
Multiplying by \(\kappa^2|g|^2\), integrating over \(\mathcal U\), and
changing variables gives
\begin{align}
    \int_{\mathcal U}\kappa^2|g|^2|DV_\nu[\eta]|^2\dA
    &\le C\int_{f(\mathcal U)}Q(\zeta)
       |DW_{\nu,\eta}(\zeta)|^2\dA(\zeta),
    \label{eq:weighted-change-variable}
\end{align}
where
\[
    Q=(\kappa^2|g|^2K)\circ f^{-1}.
\]
We estimate \(Q\) in \(L^{7/5}\).  By the area formula,
\begin{align}
    \|Q\|_{L^{7/5}(f(\mathcal U))}^{7/5}
    &=\int_{\mathcal U}
      \kappa^{14/5}|g|^{14/5}K^{7/5}J\dA.
    \label{eq:Q-75-computation}
\end{align}
There are two fixed exponential factors, \(\kappa\) and \(K\), and one
Orlicz factor, \(g\).  Their polynomial degrees add up to
\[
    \frac{14}{5}+\frac{7}{5}+\frac{14}{5}=7.
\]
Thus \cref{lem:exponential-product} applies and gives
\[
    \|Q\|_{7/5}^{7/5}
    \le C\|g\|_\Phi^{14/5},
    \qquad
    \|Q\|_{7/5}\le C\|g\|_\Phi^2.
\]
Using H\"older with exponents \(7/5\) and \(7/2\) in
\eqref{eq:weighted-change-variable}, and then \eqref{eq:DW-L7}, we find
\begin{align*}
    \int_{\mathcal U}\kappa^2|g|^2|DV_\nu[\eta]|^2\dA
    &\le C\|Q\|_{7/5}\|DW_{\nu,\eta}\|_7^2\\
    &\le C\|g\|_\Phi^2\|\eta\|_\Phi^2.
\end{align*}
Taking square roots proves \eqref{eq:weighted-pullback}.  The stated
uniformity follows directly from the dependence of the constant in
\cref{lem:exponential-product}.

\smallskip
\noindent\emph{Step 3: membership of the source in the resolvent domain and
uniqueness.}
Let \(\kappa_\mu\) be the BMO majorant associated with \(\mu\).  By
\eqref{eq:DF-bounded},
\(|A_\nu[\eta]|\le C|\eta|\).  Also,
\[
    J=(1-|\mu|^2)|f_z|^2,
    \qquad
    KJ=(1+|\mu|)^2|f_z|^2\ge |f_z|^2.
\]
Therefore
\begin{align*}
    \|\kappa_\mu A_\nu[\eta]f_z\|_2^2
    &\le C\int_{\mathcal U}
      \kappa_\mu^2|\eta|^2KJ\dA\\
    &\le C\|\eta\|_\Phi^2.
\end{align*}
For the last inequality, apply \cref{lem:exponential-product} with degrees
\(2\), \(2\), and \(1\), whose sum is five.  Thus the source in
\eqref{eq:linearized-equation} satisfies the hypothesis of
\cref{prop:uniform-principal-resolvent}.  The pullback solution constructed
in Step~1 has the principal normalization, so the uniqueness part of that
proposition identifies it with the unique principal solution.
\end{proof}

\section{Fr\'echet differentiability on the high-moment domain}

We now prove the main theorem in a quantitative local form.

\begin{theorem}[Uniform quadratic remainder]
\label{thm:quadratic-remainder}
Let \(\nu_*\in\mathscr D_\Lambda\), let \(K_0\Subset\C\), and choose a disk
\(B_R\) containing both \(K_0\) and \(\overline{\mathcal U}\).  There exist
\(r,C>0\) such that the following holds.

For every \(\nu,h\in L^\Phi_{\mathcal U}(\C)\) satisfying
\begin{equation}\label{eq:ball-conditions}
    \|\nu-\nu_*\|_\Phi<r,
    \qquad
    \|\nu+h-\nu_*\|_\Phi<r,
\end{equation}
let
\[
    f=f^{F(\nu)},
    \qquad
    f_h=f^{F(\nu+h)},
    \qquad
    V=V_\nu[h].
\]
Then
\begin{align}
    \|V\|_{W^{1,2}(K_0)}
    &\le C\|h\|_\Phi,
    \label{eq:uniform-variation-bound}\\
    \|f_h-f-V\|_{W^{1,2}(K_0)}
    &\le C\|h\|_\Phi^2.
    \label{eq:uniform-quadratic-remainder}
\end{align}
\end{theorem}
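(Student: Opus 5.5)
The plan is to write down the equation satisfied by the remainder \(R:=f_h-f-V\) relative to the \emph{perturbed} coefficient \(\mu_h:=F(\nu+h)\), bound its source with the product lemma and the weighted pullback estimate, and then invert using the principal resolvent for \(\mu_h\).

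\emph{Choice of the ball.} Since \(p(\nu_*)>\Lambda\), I will fix \(\Lambda<q<q_1<p(\nu_*)\) and let \(r\) be the radius from \cref{lem:small-ball-moment}. Because \(K_{F(\nu)}=1+|\nu|\), every coefficient \(F(\nu)\) and \(F(\nu+h)\) with \(\nu,\nu+h\) in the ball then satisfies \(\int_{\mathcal U}e^{qK}\dA\le M_0\), with \(M_0\) uniform. Applying \cref{prop:analytic-package} to this family gives three uniform inputs:
\begin{itemize}
\item the \(J\log^8\) bounds on \(B_{2R}\);
\item the uniform exponential moments of the majorants \(\kappa_{\mu_h}\);
\item the resolvent estimate \eqref{eq:principal-resolvent-estimate} with the universal constant \(8\).
\end{itemize}
I will also take \(r\) small enough that \cref{lem:rho-L7} and \cref{prop:pullback-variation} hold with constants uniform in the base point.

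\emph{Bound \eqref{eq:uniform-variation-bound}.} On \(B_{2R}\) I have \(|DV|^2\le C\,KJ\,|DW_{\nu,h}\circ f|^2\). Changing variables as in \eqref{eq:composition-Cauchy} and applying H\"older with exponents \(7/5\) and \(7/2\) gives
\[
\|DV\|_{L^2(B_{2R})}^2\le C\,\|K\circ f^{-1}\|_{7/5}\,\|DW_{\nu,h}\|_7^2 .
\]
By \eqref{eq:pullback-weight-75} and \cref{lem:exponential-product}, the factor \(\|K\circ f^{-1}\|_{7/5}\) is uniformly bounded. By \eqref{eq:DW-L7}, \(\|DW_{\nu,h}\|_7\le C\|h\|_\Phi\). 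Since \(V\) is holomorphic off \(f^{-1}(f(\overline{\mathcal U}))=\overline{\mathcal U}\subset B_R\) and is \(O(1/z)\), \cref{lem:principal-poincare} controls \(\|V\|_{L^2(K_0)}\). Together these give \eqref{eq:uniform-variation-bound}.

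\emph{Equation for the remainder.} Using \(\dbar f_h=\mu_h\dz f_h\), \(\dbar f=\mu\dz f\) and \eqref{eq:linearized-equation}, a direct computation gives
\[
(\dbar-\mu_h\dz)R=\bigl(\mu_h-\mu-DF_\nu(h)\bigr)f_z+(\mu_h-\mu)\,\dz V .
\]
The pointwise bounds \eqref{eq:F-quadratic} and \eqref{eq:DF-bounded} then give
\[
|\text{source}|\le C|h|^2|f_z|+C|h|\,|DV| .
\]
The source vanishes off \(\mathcal U\), so each term is estimated with the weight \(\kappa_{\mu_h}\).

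\emph{First term.} Using \(|f_z|^2\le KJ\), I need to bound
\(\int_{\mathcal U}\kappa_{\mu_h}^2|h|^4KJ\dA\). The polynomial degrees are \(2+4+1=7\le 8\), so \cref{lem:exponential-product} gives \(C\|h\|_\Phi^4\).

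\emph{Second term.} \cref{prop:pullback-variation} with \(\kappa=\kappa_{\mu_h}\) and \(g=h\) bounds \(\|\kappa_{\mu_h}h\,DV\|_2\) by \(C\|h\|_\Phi^2\).

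\emph{Conclusion.} Since \(R\in W^{1,2}_{\loc}\) and \(R=O(1/z)\) (all three maps are principally normalized), I want to identify \(R\) with the principal solution \(w\) given by \cref{prop:uniform-principal-resolvent} for \(\mu_h\). Then \eqref{eq:principal-resolvent-estimate} yields \(\|DR\|_2\le C\|h\|_\Phi^2\), and \cref{lem:principal-poincare} gives \eqref{eq:uniform-quadratic-remainder}.

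\emph{Main obstacle.} I expect the hardest step to be this identification. The cited uniqueness is for \(\omega\in L^2\) with \(w=\Cauchy\omega\), so I must check that \(\dbar R\in L^2\) with bounded support and that \(R=\Cauchy(\dbar R)\). This follows because \(R\) is holomorphic outside \(\overline{\mathcal U}\), vanishes at infinity, and has \(DR\in L^2_{\loc}\). Liouville's theorem, applied to \(R-\Cauchy(\dbar R)\), which is entire and tends to zero at infinity, then gives \(R=\Cauchy(\dbar R)\).

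A secondary point is uniformity. The majorant must be the one attached to \(\mu_h\), not to \(\mu\), and its exponential moment is uniform by \cref{lem:uniform-AIM-majorants}, because \(\mu_h\) lies in the same moment-bounded family.
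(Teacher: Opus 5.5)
Your proposal is correct and follows the paper's proof. The common steps are:
\begin{itemize}
\item the same remainder equation with left-hand coefficient \(\mu_h\);
\item the same two source terms, bounded by the product lemma (degrees \(2+4+1\)) and by the weighted pullback estimate with \(\kappa=\kappa_{\mu_h}\), \(g=\eta=h\);
\item the principal resolvent for \(\mu_h\), followed by the principal Poincar\'e inequality.
\end{itemize}
The differences are minor. For \eqref{eq:uniform-variation-bound} you bound \(DV\) on \(B_{2R}\) directly by the composition/H\"older estimate \(\|K\circ f^{-1}\|_{7/5}\|DW_{\nu,h}\|_7^2\), whereas the paper goes through \(\|\kappa_\mu DF_\nu(h)f_z\|_2\) and the resolvent. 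You also make explicit, via Liouville applied to \(R-\Cauchy(\dbar R)\), the identification of the remainder with the resolvent solution, which the paper simply attributes to uniqueness.
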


\begin{proof}
Choose exponents
\begin{equation}\label{eq:local-exponent-choices}
    \Lambda<q<q_1<p(\nu_*),
    \qquad
    p_{\mathrm{res}}<\alpha<q,
    \qquad
    0<\sigma<\alpha.
\end{equation}
By \cref{lem:small-ball-moment}, after choosing \(r>0\) sufficiently
small, every \(\xi\) in the ball \(\|\xi-\nu_*\|_\Phi<r\) satisfies
\begin{equation}\label{eq:uniform-local-nu-moment}
    \int_{\mathcal U}e^{q|\xi|}\dA\le M.
\end{equation}
Since \(K_{F(\xi)}=1+|\xi|\), this is equivalent, up to the constant
factor \(e^q\), to a common \(q\)-moment for the distortions.  Hence
\cref{prop:analytic-package} applies uniformly to all coefficients in the
ball.  In particular, the corresponding Jacobians have a common
\(L\log^8L\) bound.  Each coefficient has its own majorant
\(\kappa_\xi\), constructed using the same \(\alpha\), and
\cref{lem:uniform-AIM-majorants} gives
\begin{equation}\label{eq:uniform-local-kappa-moment}
    \sup_{\|\xi-\nu_*\|_\Phi<r}
    \int_{B_{R_0}}e^{\sigma\kappa_\xi}\dA<\infty.
\end{equation}
All constants below depend only on these uniform bounds and on the fixed
sets.

We first prove \eqref{eq:uniform-variation-bound}.  The source of the
linearized equation for \(V=V_\nu[h]\) is
\(H=DF_\nu(h)f_z\).  The calculation in Step~3 of
\cref{prop:pullback-variation}, now with uniform constants, gives
\begin{equation}\label{eq:uniform-linear-source}
    \|\kappa_\mu H\|_2\le C\|h\|_\Phi.
\end{equation}
The principal resolvent estimate therefore yields
\[
    \|DV\|_{L^2(\C)}\le C\|h\|_\Phi.
\]
The function \(V\) is holomorphic off \(\overline{\mathcal U}\) and is
\(O(1/z)\) at infinity.  Applying \cref{lem:principal-poincare} on a disk
containing \(K_0\) gives
\[
    \|V\|_{L^2(K_0)}\le C\|DV\|_{L^2(B_{2R})}
    \le C\|h\|_\Phi,
\]
which proves \eqref{eq:uniform-variation-bound}.

We turn to the quadratic remainder.  Put
\begin{equation}\label{eq:Z-definition}
    Z=f_h-f-V,
    \qquad
    \mu=F(\nu),
    \qquad
    \mu_h=F(\nu+h).
\end{equation}
Subtracting the equations for \(f_h\) and \(f\), we compute
\begin{align*}
    \dbar(f_h-f)-\mu_h\dz(f_h-f)
    &=\mu_h(f_h)_z-\mu f_z
      -\mu_h\bigl((f_h)_z-f_z\bigr)\\
    &=(\mu_h-\mu)f_z.
\end{align*}
On the other hand, the linearized equation
\(\dbar V-\mu\dz V=DF_\nu(h)f_z\) can be rewritten with left-hand
coefficient \(\mu_h\) as
\[
    \dbar V-\mu_h\dz V
    =DF_\nu(h)f_z-(\mu_h-\mu)V_z.
\]
Subtracting these two identities gives
\begin{equation}\label{eq:Z-equation}
    \dbar Z-\mu_h\dz Z=R_1+R_2,
\end{equation}
where
\begin{align}
    R_1&=(\mu_h-\mu-DF_\nu(h))f_z,
    \label{eq:R1}\\
    R_2&=(\mu_h-\mu)V_z.
    \label{eq:R2}
\end{align}
Both sources vanish almost everywhere off \(\mathcal U\).  The principal
normalizations of \(f_h\), \(f\), and \(V\) also imply
\(Z(z)=O(1/z)\).

Let \(\kappa_h=\kappa_{\mu_h}\), the resolvent majorant associated with
the coefficient on the left-hand side of \eqref{eq:Z-equation}.  By the
quadratic Taylor estimate \eqref{eq:F-quadratic},
\[
    |\mu_h-\mu-DF_\nu(h)|\le C|h|^2,
\]
and hence
\begin{align}
    \|\kappa_hR_1\|_2^2
    &\le C\int_{\mathcal U}
       \kappa_h^2|h|^4|f_z|^2\dA\notag\\
    &\le C\int_{\mathcal U}
       \kappa_h^2|h|^4K_\mu J_f\dA
     \le C\|h\|_\Phi^4.
    \label{eq:R1-full-estimate}
\end{align}
Here \(|f_z|^2\le K_\mu J_f\), and the final application of
\cref{lem:exponential-product} uses the degrees
\[
    2\quad\text{from }\kappa_h,
    \qquad 4\quad\text{from }h,
    \qquad 1\quad\text{from }K_\mu,
\]
whose sum is seven.  The factors \(\kappa_h\) and \(K_\mu\) need not be
pointwise comparable; their separate exponential moments
\eqref{eq:uniform-local-nu-moment} and
\eqref{eq:uniform-local-kappa-moment} are exactly what the product lemma
requires.  Taking square roots in \eqref{eq:R1-full-estimate} gives
\begin{equation}\label{eq:R1-estimate}
    \|\kappa_hR_1\|_2\le C\|h\|_\Phi^2.
\end{equation}

By \eqref{eq:DF-bounded}, the map \(F\) is globally Lipschitz, so
\(|\mu_h-\mu|\le C|h|\).  Consequently,
\[
    |\kappa_hR_2|
    \le C\kappa_h|h|\,|DV|.
\]
Apply the weighted pullback estimate \eqref{eq:weighted-pullback} to the
base point \(\nu\), with
\[
    \kappa=\kappa_h,
    \qquad g=h,
    \qquad \eta=h.
\]
The uniform exponential moment \eqref{eq:uniform-local-kappa-moment}
ensures that its constant is uniform, and we obtain
\begin{equation}\label{eq:R2-estimate}
    \|\kappa_hR_2\|_2
    \le C\|h\|_\Phi^2.
\end{equation}
Combining \eqref{eq:R1-estimate} and \eqref{eq:R2-estimate}, and applying
the principal resolvent estimate for the coefficient \(\mu_h\), gives
\begin{equation}\label{eq:DZ-global-estimate}
    \|DZ\|_{L^2(\C)}
    \le C\|\kappa_h(R_1+R_2)\|_2
    \le C\|h\|_\Phi^2.
\end{equation}
Here uniqueness of the principally normalized solution identifies \(Z\)
with the resolvent solution.  Finally, \(Z\) is holomorphic off the fixed
support and is \(O(1/z)\).  The principal Poincar\'e inequality and
\eqref{eq:DZ-global-estimate} therefore give
\[
    \|Z\|_{W^{1,2}(K_0)}\le C\|h\|_\Phi^2,
\]
which is \eqref{eq:uniform-quadratic-remainder}.
\end{proof}

The following elementary Banach-space lemma upgrades a uniform quadratic
remainder to Lipschitz continuity of the derivative.

\begin{lemma}[Quadratic remainder implies a Lipschitz derivative]
\label{lem:quadratic-implies-C11}
Let \(X,Y\) be real Banach spaces, let \(B_X(x_*,r)\) be an open ball, and
let \(G:B_X(x_*,r)\to Y\) be Fr\'echet differentiable.  Suppose that
\begin{equation}\label{eq:abstract-quadratic}
    \|G(x+h)-G(x)-DG_xh\|_Y
    \le C\|h\|_X^2
\end{equation}
whenever \(x,x+h\in B_X(x_*,r)\).  Then, on the concentric ball of radius
\(r/4\),
\begin{equation}\label{eq:abstract-Lipschitz}
    \|DG_y-DG_x\|_{\mathcal L(X,Y)}
    \le 8C\|y-x\|_X.
\end{equation}
\end{lemma}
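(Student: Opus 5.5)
The plan is a three-point comparison. We apply \eqref{abstract-quadratic} to three increments that share a common endpoint, so that the affine parts cancel and only $t(DG_y-DG_x)u$ survives. The step size is then taken equal to $\|y-x\|_X$. If $x=y$ there is nothing to prove, so fix $x,y\in B_X(x_*,r/4)$ with $x\ne y$, and put $d=y-x$. Note that $0<\|d\|_X<r/2$. Fix a unit vector $u\in X$ and a step $t>0$, to be chosen below.

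\emph{Step 1: membership in the ball.} The points $x$, $y=x+d$ and $y+tu=x+d+tu$ all lie in $B_X(x_*,r)$ provided $t\le r/2$, since
\[
\|y+tu-x_*\|_X<\tfrac r4+t\le\tfrac{3r}{4}<r .
\]
We will take $t=\|d\|_X<r/2$, so this condition holds. Hence \eqref{abstract-quadratic} may be applied to each of the following pairs:
\begin{align*}
G(x+d+tu)-G(x)&=DG_x(d+tu)+E_1, & \|E_1\|_Y&\le C\|d+tu\|_X^2,\\
G(y+tu)-G(y)&=DG_y(tu)+E_2, & \|E_2\|_Y&\le Ct^2,\\
G(y)-G(x)&=DG_x d+E_3, & \|E_3\|_Y&\le C\|d\|_X^2.
\end{align*}

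\emph{Step 2: cancellation.} Subtract the second and third identities from the first. The left-hand sides cancel exactly, and by linearity of $DG_x$ the right-hand sides reduce to
\[
t\,(DG_y-DG_x)u=E_1-E_2-E_3 .
\]
Using $\|d+tu\|_X\le \|d\|_X+t$, this gives
\[
t\,\|(DG_y-DG_x)u\|_Y\le C\bigl[(\|d\|_X+t)^2+t^2+\|d\|_X^2\bigr].
\]

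\emph{Step 3: choice of $t$.} With $t=\|d\|_X$ the bracket equals $6\|d\|_X^2$. Dividing by $t$ yields
\[
\|(DG_y-DG_x)u\|_Y\le 6C\|y-x\|_X\le 8C\|y-x\|_X .
\]
Taking the supremum over unit vectors $u$ gives \eqref{abstract-Lipschitz}.

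\emph{Main obstacle.} The only delicate point is bookkeeping: we must check that all evaluation points stay inside $B_X(x_*,r)$, where \eqref{abstract-quadratic} is available. This is exactly why the conclusion is stated on the ball of radius $r/4$ and why the step must satisfy $t=\|d\|_X<r/2$. The constant $6C$ obtained this way is comfortably below the stated $8C$.
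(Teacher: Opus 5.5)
Your proof is correct and is essentially the paper's argument: the paper also takes $t=\|y-x\|_X$ and expands around the common base point $x$, so that the linear parts cancel. The only difference is that the paper routes through the auxiliary point $x+tv$, using the difference quotient $A_t$ and a triangle inequality; this counts the $Ct^2$ error of the increment from $x$ to $x+tv$ twice and gives $8C$, whereas your three-point identity avoids that and yields the sharper constant $6C$.
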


\begin{proof}
Put \(\delta=\|y-x\|_X\), let \(\|v\|_X=1\), and assume
\(\delta>0\).  Set \(t=\delta\).  Since \(x,y\in B_X(x_*,r/4)\), all
points below remain in \(B_X(x_*,r)\).  Define
\[
    A_t=
    \frac{G(y+tv)-G(y)}{t}
    -\frac{G(x+tv)-G(x)}{t}.
\]
Applying \eqref{eq:abstract-quadratic} to the increment \(tv\) at \(x\)
and \(y\) gives
\[
    \|A_t-(DG_y-DG_x)v\|_Y\le2Ct.
\]
Expanding \(G(y+tv)\), \(G(y)\), and \(G(x+tv)\) at the common base point
\(x\), the linear terms cancel, and
\[
    \|A_t\|_Y
    \le\frac{C}{t}\bigl((\delta+t)^2+\delta^2+t^2\bigr)
    =6C\delta.
\]
Thus \(\|(DG_y-DG_x)v\|_Y\le8C\delta\).  Taking the supremum over unit
vectors proves the claim; \(\delta=0\) is trivial.
\end{proof}

\begin{proof}[Proof of \cref{thm:intro-main}]
Fix \(\nu_*\in\mathscr D_\Lambda\) and \(K\Subset\C\).  The construction
of \(V_\nu[\eta]\) is real-linear in \(\eta\), and
\eqref{eq:uniform-variation-bound} makes it a bounded operator from
\(L^\Phi_{\mathcal U}(\C)\) to \(W^{1,2}(K)\).  By
\eqref{eq:uniform-quadratic-remainder},
\[
    \frac{\|\Sol(\nu+h)-\Sol(\nu)-V_\nu[h]\|_{W^{1,2}(K)}}
         {\|h\|_\Phi}
    \le C\|h\|_\Phi\longrightarrow0.
\]
Thus \(D\Sol_\nu[\eta]=V_\nu[\eta]\).  The equation and principal
normalization follow from \cref{prop:pullback-variation}.  Applying
\cref{lem:quadratic-implies-C11} to the uniform quadratic estimate gives
\eqref{eq:intro-lipschitz-derivative} on a smaller ball.  Since this holds
for every compact \(K\), the map is locally real \(C^{1,1}\) with values
in \(W^{1,2}_{\loc}(\C)\).
\end{proof}

\begin{corollary}[First variation in every \(L^\Phi\)-direction]
\label{cor:all-Lphi-directions}
Let \(\nu\in\mathscr D_\Lambda\) and
\(\eta\in L^\Phi_{\mathcal U}(\C)\).  Then, for real \(t\to0\),
\begin{equation}\label{eq:directional-convergence}
    \frac{f^{F(\nu+t\eta)}-f^{F(\nu)}}{t}
    \longrightarrow V_\nu[\eta]
    \qquad\text{in }W^{1,2}_{\loc}(\C),
\end{equation}
where \(V_\nu[\eta]\) is given by \eqref{eq:linearized-equation} and the
principal normalization.
\end{corollary}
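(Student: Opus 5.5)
The plan is to deduce the corollary directly from \cref{thm:quadratic-remainder}, applied along the line through \(\nu\) in the direction \(\eta\). Since \(\nu\in\mathscr D_\Lambda\), I take \(\nu_*=\nu\). For an arbitrary compact set \(K_0\Subset\C\), I choose a disk \(B_R\) containing \(K_0\) and \(\overline{\mathcal U}\). The theorem then supplies \(r,C>0\). If \(\eta=0\), both sides of \eqref{eq:directional-convergence} vanish, so I assume \(\eta\ne0\).

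For real \(t\) with \(|t|\,\|\eta\|_\Phi<r\), the pair \(\nu\) and \(h=t\eta\) satisfies \eqref{eq:ball-conditions}. The first condition is trivial, because \(\|\nu-\nu_*\|_\Phi=0\). The second holds because \(\|\nu+t\eta-\nu_*\|_\Phi=|t|\,\|\eta\|_\Phi\). The construction of \(V_\nu[\cdot]\) in \eqref{eq:W-and-V} is real-linear: \(\eta\mapsto A_\nu[\eta]\) is real-linear pointwise, hence so is \(\rho_{\nu,\eta}\), then \(\Cauchy\rho_{\nu,\eta}\), and then the composition with \(f\). Therefore \(V_\nu[t\eta]=tV_\nu[\eta]\). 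Dividing \eqref{eq:uniform-quadratic-remainder} by \(|t|\) gives
\[
    \Bigl\|\frac{f^{F(\nu+t\eta)}-f^{F(\nu)}}{t}-V_\nu[\eta]\Bigr\|_{W^{1,2}(K_0)}
    \le C|t|\,\|\eta\|_\Phi^2\longrightarrow0 .
\]
Since \(K_0\) is arbitrary, this is convergence in \(W^{1,2}_{\loc}(\C)\).

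The identification of the limit with the principally normalized solution of \eqref{eq:linearized-equation} is exactly \cref{prop:pullback-variation}. That proposition supplies the equation, the normalization \(O(1/z)\), and uniqueness through \cref{prop:uniform-principal-resolvent}. Equivalently, one can invoke \(D\Sol_\nu=V_\nu\) from \cref{thm:intro-main} and note that a Fr\'echet derivative yields the directional derivative along the curve \(t\mapsto\nu+t\eta\).

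There is no genuine obstacle here. The only points needing care are that the ball around \(\nu_*=\nu\) accommodates the whole segment for small \(|t|\), and the real-linearity of \(\eta\mapsto V_\nu[\eta]\) for negative \(t\). The latter holds because \(DF_\nu\) is real-linear, so sign changes are harmless.
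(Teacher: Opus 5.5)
Your proposal is correct and follows the paper's own proof: apply \cref{thm:quadratic-remainder} with \(\nu_*=\nu\) and \(h=t\eta\), use the real-linearity \(V_\nu[t\eta]=tV_\nu[\eta]\), and divide by \(|t|\). The details you add, namely the ball condition \(|t|\,\|\eta\|_\Phi<r\) and the identification of the limit through \cref{prop:pullback-variation}, are exactly what the paper's one-line proof leaves implicit.
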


\begin{proof}
Apply the quadratic remainder estimate with \(h=t\eta\), and divide by
\(|t|\).
\end{proof}

\begin{corollary}[The origin]
\label{cor:origin}
For every \(\eta\in L^\Phi_{\mathcal U}(\C)\),
\begin{equation}\label{eq:origin-derivative}
    D\Sol_0[\eta]=\frac12\Cauchy\eta.
\end{equation}
Moreover, for every compact \(K\Subset\C\),
\begin{equation}\label{eq:origin-quadratic}
    \left\|f^{F(h)}-\id-\frac12\Cauchy h\right\|_{W^{1,2}(K)}
    \le C_K\|h\|_\Phi^2
\end{equation}
for all sufficiently small \(h\in L^\Phi_{\mathcal U}(\C)\).
\end{corollary}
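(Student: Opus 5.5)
The plan is to specialize \cref{thm:intro-main} and \cref{thm:quadratic-remainder} to the base point \(\nu=0\), which lies in \(\mathscr D_\Lambda\) because \(p(0)=\infty\), and then to identify the derivative explicitly.

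At \(\nu=0\) we have \(F(0)=0\), so the principal solution is \(f^{0}=\id\), with \(f_z=1\) and \(J_f=1\). By \cref{lem:F-calculus}, \(DF_0=\tfrac12 I\), hence \(A_0[\eta]=\tfrac12\eta\). Substituting into \eqref{eq:rho-definition}, with \(1-|\mu|^2=1\) and phase factor \(f_z/\overline{f_z}=1\), gives
\[
    \rho_{0,\eta}=\tfrac12\eta .
\]
Then \eqref{eq:W-and-V} yields
\[
    V_0[\eta]=W_{0,\eta}\circ\id=\Cauchy(\tfrac12\eta)=\tfrac12\Cauchy\eta .
\]
By real-linearity of the Cauchy transform this equals \(\tfrac12\Cauchy\eta\). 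This identification is the only computation needed.

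As a consistency check, the function \(w=\tfrac12\Cauchy\eta\) satisfies \(\dbar w=\tfrac12\eta=DF_0(\eta)\,\dz\id\), which is \eqref{eq:linearized-equation} with \(\mu=0\). Since \(\eta\) has bounded support, \(w=O(1/z)\). So the uniqueness part of \cref{prop:pullback-variation}, or directly of \cref{prop:uniform-principal-resolvent} with \(\mu=0\) and \(\kappa_0\) bounded on the support, confirms the identification. By the proof of \cref{thm:intro-main}, \(D\Sol_0[\eta]=V_0[\eta]\), and \eqref{eq:origin-derivative} follows.

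For \eqref{eq:origin-quadratic}, apply \cref{thm:quadratic-remainder} with \(\nu_*=0\), \(\nu=0\) and \(K_0=K\). This provides \(r,C\) such that, for \(\|h\|_\Phi<r\),
\[
    \|f^{F(h)}-\id-V_0[h]\|_{W^{1,2}(K)}\le C\|h\|_\Phi^2 .
\]
Substituting \(V_0[h]=\tfrac12\Cauchy h\) gives the claim. No step is a genuine obstacle; the only point to keep in mind is that the convention on \(\{f_z=0\}\) in \eqref{eq:rho-definition} is vacuous here, since \(f_z\equiv1\).
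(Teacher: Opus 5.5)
Your proposal is correct and follows essentially the same route as the paper: specialize to \(\nu=0\), where \(\mu=0\), \(f=\id\) and \(DF_0=\tfrac12 I\), identify the principal solution of \(\dbar V=\eta/2\) as \(\tfrac12\Cauchy\eta\), and read off the quadratic estimate from \cref{thm:quadratic-remainder} with \(\nu_*=\nu=0\). Your explicit pullback computation \(\rho_{0,\eta}=\tfrac12\eta\) is a harmless extra confirmation of the identification that the paper obtains directly from the linearized equation and uniqueness.
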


\begin{proof}
At the origin, \(\mu=0\), \(f=\id\), and \(DF_0=(1/2)I\).  The linearized
equation is \(\dbar V=\eta/2\), and the principal solution is
\(V=(1/2)\Cauchy\eta\).  The remainder estimate is
\cref{thm:quadratic-remainder}.
\end{proof}

\end{document}